\def\?[#1]{\textbf{[#1]}\marginpar{\Large{\textbf{??}}}}
\newtheorem{theo}{Theorem}
\newtheorem{prop}{Proposition}[section]
\newtheorem{lem}[prop]{Lemma}
\newtheorem{cor}[prop]{Corollary}
\theoremstyle{remark}
\newtheorem{rem}{Remark}
\numberwithin{equation}{section}
\DeclareMathOperator{\sgn}{sgn}
\DeclareMathOperator{\Sp}{Sp}
\renewcommand{\Re}{\operatorname{Re}}
\title[Effects of confinement for single-well potentials]
{Effects of confinement for single-well potentials}
\author{Oran Gannot}
\email{ogannot@math.berkeley.edu}
\address{Department of Mathematics, Evans Hall, University of California,
Berkeley, CA 94720, USA}
\begin{document}

\begin{abstract}
We study bound states generated by a unique potential minimum in the situation where the system is strongly confined to a bounded region containing the minimum (by imposing Dirichlet boundary conditions). In this case the eigenvalues of the confined system differ from those of the unconfined system by an exponentially small quantity in the semiclassical limit. An asymptotic expansion for this shift is established. The formulas are evaluated explicitly for the harmonic oscillator and an application to the Coulomb potential at a fixed angular momentum is given.
\end{abstract}

\maketitle

\section{Introduction}

We study semiclassical Schr\"odinger operators with potential $V$ on subsets of the line, where $V$ admits a unique global minimum. More precisely, $V$ is required to satisfy
\begin{enumerate} \itemsep2pt
\item $V \in C^\infty(\mathbb{R})$,
\item $V(0) = V'(0) = 0$ and $V(x) > 0$ for $x \neq 0$,
\item $\liminf_{|x| \rightarrow \infty} V(x) > 0$.
\end{enumerate}
Define the self-adjoint operator $P(h) = h^2 D_x^2 + V$ acting on $L^2(\mathbb{R})$. If $\Omega \subset \mathbb{R}$ is a bounded open interval, let $P_\Omega(h)$ denote the Dirichlet realization of $h^2 D_x^2 + V$ on $L^2(\Omega)$.

It is well known that $P(h)$ has $m_0$ eigenvalues in the interval $I(h) = [0,C_0 h]$, where $m_0$ is the largest integer such that $(2m_0 + 1)\sqrt{V''(0)/2} < C_0$ and $h$ is sufficiently small depending on $C_0$ \cite{Helffer:1983,Simon:1983} --- such eigenvalues are typically referred to as \emph{low lying}. In fact, there exists a bijection 
\begin{equation} \label{eq:harmonicapproximation}  \sigma: \Sp P(h) \cap I(h) \rightarrow \Sp \widetilde{P}(h) \cap I(h), \text{  satisfying } \sigma(\lambda) - \lambda = \mathcal{O}(h^2), 
\end{equation}
where $\widetilde{P}(h) = h^2 D_x^2 + \tfrac{V''(0)}{2}x^2$ is the harmonic oscillator with eigenvalues 
\[
\sqrt{\tfrac{V''(0)}{2}}(2m+1)h, \ m\in \{0,1,2,\ldots \}.
\]
This result, a version of which actually holds in any dimension, is originally due to Simon \cite{Simon:1983} and Helffer--Sj\"ostrand \cite{Helffer:1983}; see also \cite{Dimassi:1999,Helffer:1988} for textbook treatments.

Now assume that $0 \in \Omega$, so $\Omega$ contains the global minimum of $V$ strictly in its interior. Then \eqref{eq:harmonicapproximation} is also valid for $P_\Omega(h)$ replacing $P(h)$. Moreover, tunneling estimates imply that the low lying eigenvalues of $P_\Omega(h)$ differ from those of $P(h)$ by an exponentially small quantity \cite[Chap. 6]{Dimassi:1999}: there exists $\varepsilon > 0$ and a bijection 
\begin{equation} \label{eq:tunneling}  \tau: \Sp P(h) \cap I(h) \rightarrow \Sp P_\Omega(h) \cap I(h), \text{  satisfying } \tau(\lambda) - \lambda = \mathcal{O}(e^{-\varepsilon/h}).
\end{equation}
This is also originally due to Helffer--Sj\"ostrand \cite{Helffer:1983}, and is valid in any dimension. 

The main theorem of this paper provides an asymptotic expansion for $\tau(\lambda) - \lambda$. To formulate the first result, write $\lambda_m^0$ for the $m$'th eigenvalue of $P(h)$, and similarly let $\lambda_m^\Omega$ denote the $m$'th eigenvalue of $P_\Omega(h)$. Note that $\tau (\lambda_m^0) = \lambda_m^\Omega$.
\begin{theo} \label{theo:maintheo1}
Fix an integer $m\geq 0$ and $\Omega = (r_-,r_+)$ with $-\infty < r_- < 0 < r_+ < \infty$. Then there exists $h_0 = h_0(m)$ such that $h \in (0,h_0)$ implies
\begin{equation} \label{eq:shift}
\lambda^\Omega_m - \lambda^0_m = h^{\frac{1}{2} - m} \sum_\pm  e^{-2\phi(r_\pm)/h} s_\pm(h).
\end{equation}
Here
\[
\phi(x) = \sgn x \int_0^x \sqrt{V}(t) \,dt,
\]
and 
\[
s_\pm(h)\sim \sum_{j=0}^\infty s^\pm_j\, h^j ,\quad s^\pm_0 = \frac{2^{m+1}}{m! \, \pi^{\frac12}}\left(\sqrt{\frac{V''(0)}{2}}\right)^{m+\frac{1}{2}}  \sqrt{V(r_\pm)}\, a_0(r_\pm)^2,
\]
where
\[
a_0(x) = \lim_{\varepsilon \rightarrow 0}\, (\varepsilon \sgn x)^m \exp{\left(\int_{\varepsilon\sgn x}^{x} \frac{\sqrt{\tfrac{V''(0)}{2}}(2m + 1) - \phi''(t)}{2\phi'(t)} \,dt\right)}.
\]
\end{theo}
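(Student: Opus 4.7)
My plan is to reduce the shift to boundary data via a Green's identity on $\Omega$, and then to evaluate the boundary terms by a precise WKB analysis. Let $u_m^0, u_m^\Omega$ denote the $L^2$-normalized eigenfunctions of $P(h)$ and $P_\Omega(h)$ associated with $\lambda_m^0$ and $\lambda_m^\Omega$; subtracting the two eigenvalue equations and integrating by parts on $\Omega$ yields
\[
(\lambda_m^\Omega - \lambda_m^0)\int_\Omega u_m^\Omega u_m^0 \, dx = -h^2 \bigl[\partial_x u_m^\Omega \cdot u_m^0\bigr]_{r_-}^{r_+},
\]
using $u_m^\Omega(r_\pm) = 0$. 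By \eqref{eq:tunneling} and Agmon estimates the overlap integral equals $1$ modulo exponentially small errors, so the asymptotics of $\lambda_m^\Omega - \lambda_m^0$ is determined by the right-hand side.

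Next I will construct a WKB expansion for $u_m^0$ in the classically forbidden region. With $\omega = \sqrt{V''(0)/2}$, the ansatz $u_m^0 \sim C(h)\, e^{-\phi/h}\sum_{j\geq 0} a_j h^j$ together with $\lambda_m^0 = \omega(2m+1)h + O(h^2)$ reduces $(P(h) - \lambda_m^0)u_m^0 = 0$ to a transport hierarchy; using $(\phi')^2 = V$, the leading equation reads
\[
2\phi'(x) a_0'(x) + \bigl[\phi''(x) - \omega(2m+1)\bigr] a_0(x) = 0.
\]
Its solution with the correct regularity at $x=0$ is precisely the $a_0$ displayed in the theorem, the $(\varepsilon\,\sgn x)^m$ factor compensating the logarithmic singularity $m/t$ of the integrand and normalizing $a_0(x) \sim x^m$ as $x \to 0$. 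The overall constant $C(h)$ is pinned down by matching to the Hermite function of $\widetilde{P}(h)$ in a region $\sqrt{h}\ll|x|\ll 1$: from $H_m(\sqrt{\omega/h}\,x)\sim 2^m(\omega/h)^{m/2} x^m$ one reads off $C(h) = 2^{m/2}\omega^{1/4+m/2}\pi^{-1/4}(m!)^{-1/2}\, h^{-1/4-m/2}(1+O(h))$, and the higher transport equations determine the subleading coefficients $s_j^\pm$.

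To analyze the boundary term, I express $u_m^\Omega$ near $r_+$ as $c_+ u_+ + c_- u_-$, where $u_\pm$ are the decaying and growing WKB solutions $\sim a_0^\pm(x)\, e^{\mp\phi(x)/h}$ of $(P(h) - \lambda_m^0)u = 0$. Matching to $u_m^0$ inside $\Omega$ forces $c_+ = C(h)(1+O(h^\infty))$, while the Dirichlet condition $u_m^\Omega(r_+) = 0$ imposes $c_-/c_+ = -e^{-2\phi(r_+)/h}(1+O(h))$. Differentiating then gives $\partial_x u_m^\Omega(r_+) = -2h^{-1}\sqrt{V(r_+)}\, u_m^0(r_+)(1+O(h))$; the analogous computation at $r_-$, where $\phi'(r_-) = -\sqrt{V(r_-)}$, contributes with the same sign after accounting for the orientation of the bracket. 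Substituting and using $u_m^0(r_\pm)^2 \sim C(h)^2 a_0(r_\pm)^2 e^{-2\phi(r_\pm)/h}$ reproduces the claimed leading order, and the full expansion follows from the higher-order WKB data. The main technical obstacle is upgrading the formal WKB series into genuine exact solutions with errors exponentially small in $h$ rather than merely $O(h^\infty)$; this is achieved by a Wronskian/variation-of-parameters argument exploiting the one-dimensional structure and smoothness of $V$ on $[r_-, r_+]$, in the spirit of Helffer--Sj\"ostrand.
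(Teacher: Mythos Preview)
Your approach via the Green's identity (a Herring-type formula) is a valid route to \eqref{eq:shift}, but it is genuinely different from the paper's.  The paper never computes $\partial_x u_m^\Omega(r_\pm)$ or decomposes $u_m^\Omega$ into growing and decaying solutions.  Instead it sets up a shooting problem: with initial data at $x=0$ fixed by $u^0$ and a free parameter $\beta$, it defines $\mathbf{G}(\lambda,\beta) = (u_{\lambda,\beta}(r_+),u_{\lambda,\beta}(r_-))$ and locates the Dirichlet eigenvalue as a zero of $\mathbf{G}$ by a Newton/contraction iteration starting at $(\lambda^0,\beta^0)$.  The Jacobian $D\mathbf{G}(\lambda^0,\beta^0)$ is computed by variation of parameters using an explicit complementary solution $v^0(x)=u^0(x)\int^x (u^0)^{-2}$, and the first Newton step already produces the full asymptotic series, later iterates contributing only $O(e^{-4\phi(r_\pm)/h})$.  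The advantage of the paper's scheme is that everything is expressed in terms of solutions of the $\lambda^0$ equation, so one never has to control $u_m^\Omega$ near $\partial\Omega$; your approach is more direct and closer to the classical tunneling literature, and arguably more transparent for the leading term.

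Two places in your sketch need tightening.  First, when you write $u_m^\Omega = c_+ u_+ + c_- u_-$ with $u_\pm$ solving $(P(h)-\lambda_m^0)u=0$, this is not literally true since $u_m^\Omega$ satisfies the equation with eigenvalue $\lambda_m^\Omega$; you should take $u_\pm$ to be exact solutions for $\lambda_m^\Omega$ (their WKB amplitudes agree with those for $\lambda_m^0$ to all orders in $h$ because the eigenvalue shift is exponentially small).  Second, to recover the \emph{full} expansion $s_\pm(h)\sim\sum_j s_j^\pm h^j$ rather than just $s_0^\pm$, you need $\partial_x u_m^\Omega(r_\pm)$ modulo $O(h^\infty)$ relative errors, which requires both the full WKB series for $u_\pm$ and knowledge of $c_+$ up to $O(h^\infty)$; the Agmon estimate you invoke gives $\|u_m^\Omega-u_m^0\|_{H^1}$ exponentially small on interior compacta, and combined with the one-dimensional Sobolev embedding and the explicit growing solution this does suffice, but the passage from the interior estimate to the boundary derivative is precisely the ``Wronskian/variation-of-parameters argument'' you defer to the last sentence, and it carries most of the analytic weight.
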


The method of proof also applies to certain operators arising from spherically symmetric potentials in higher dimensions. Consider the operator $\mathbf{Q}(h) =  -h^2 \Delta_{\mathbb{R}^3} + \mathbf{W}(\mathbf{x})$ on $L^2(\mathbb{R}^3)$, where $\mathbf{W}(\mathbf{x}) = W(|\mathbf{x}|)$ for some $W: \mathbb{R} \rightarrow \mathbb{R}$. At a fixed angular momentum $\ell$, the study of $\mathbf{Q}(h)$ is equivalent to that of the effective Hamiltonian 
\[
Q(\nu;h) = h^2 D_x^2 + h^2 (\nu^2 - 1/4) x^{-2} + W(x)
\]
on $L^2((0,\infty))$, where $\nu := \ell + 1/2$. In fact, the main result holds for any $\nu > 0$. The physical potential $W$ is assumed to satisfy properties analogous to $V$,
\begin{enumerate}
\setcounter{enumi}{5} \itemsep2pt
\item $W \in C^\infty([0,\infty))$ \label{itm:first},
\item $W(0) = W'(0) = 0$ and $W(x) > 0$ for $x > 0$,
\item $\liminf_{x \rightarrow \infty} W(x) > 0$,
\item $W^{(2k+1)}(0) = 0$ for $k \geq 0$. \label{itm:fifth}
\end{enumerate}
Note that the assumption \eqref{itm:fifth} (along with assumption \eqref{itm:first}) is equivalent to the smoothness of $\mathbf{W}$ defined by  $\mathbf{W}(\mathbf{x}) = W(|\mathbf{x}|)$. In any case, it is necessary for the main result. 

If $0 < \nu < 1$, then $h^2 D_x^2 + h^2 (\nu^2 - 1/4) x^{-2}$ is not essentially self-adjoint on $C_c^\infty((0,\infty))$; instead we consider the Friedrichs extension, which can be characterized as the unbounded operator associated to the quadratic form
\[
\mathcal{Q}(u) = \int_0^\infty |hD_xu + ih(1/2-\nu)x^{-1}u|^2 \, dx
\]
on $H^1_0((0,\infty))$. This is further equivalent to the boundary condition 
\begin{equation} \label{eq:boundarycondition}
\lim_{x\rightarrow 0} x^{\nu - 1/2}u(x) = 0,
\end{equation} 
see \cite{Everitt:2007}. Now if $\Lambda = (0,L)$ denotes a finite interval, define $Q_\Lambda(\nu;h)$ as the self-adjoint operator on $L^2(\Lambda)$ with Dirichlet boundary conditions at $x = L$ and the boundary condition \eqref{eq:boundarycondition} when $0 < \nu <1$.

Although perhaps lesser known, there are natural analogues $\sigma_\nu, \tau_\nu$ of $\sigma, \tau$ as in \eqref{eq:harmonicapproximation}, \eqref{eq:tunneling}: define the harmonic oscillator 
\[
\widetilde{Q}_\mu(\nu;h) = h^2 D_x^2 + h^2 (\nu^2 - 1/4) x^{-2} + \tfrac{W''(0)}{2} x^2,
\] 
with eigenvalues $2\sqrt{\tfrac{W''(0)}{2}}(2m + 1 + \nu)h, \ m\in \{0,1,2,\ldots \}$. Then substitute 
\[
P(h) \Longleftrightarrow Q(\nu;h); \quad P_\Omega(h) \Longleftrightarrow Q_\Lambda(\nu;h); \quad \widetilde{P}(h) \Longleftrightarrow \widetilde{Q}(\nu;h),
\]
in \eqref{eq:harmonicapproximation}, \eqref{eq:tunneling} to get the appropriate statements for $\sigma_\nu, \, \tau_\nu$. Writing $\lambda_m^0$ and $\lambda^\Lambda_m$ for the $m$'th eigenvalues of $Q(\nu;h)$ and $Q_\Lambda(\nu;h)$, the following analogue of Theorem \ref{theo:maintheo1} holds.

\begin{theo} \label{theo:maintheo2}
Fix an integer $m\geq 0$ and $\Lambda = (0,L)$ with $0 < L < \infty$. Then there exists $h_0 = h_0(m)$ such that $h \in (0,h_0)$ implies
\begin{equation} \label{eq:frobshift}
\lambda^\Lambda_m - \lambda^0_m = h^{-\nu-2m} e^{-2\phi(L)/h} s(\nu;h).
\end{equation}
Here,
\[
\phi(x) = \int_0^x \sqrt{W}(t) \,dt
\]
and 
\[
s(\nu;h) \sim \sum_{j=0}^\infty s_j(\nu)\, h^j , \quad s_0(\nu) =  \frac{4\sqrt{W(L)}}{\Gamma(1+m+\nu) m!}\left(\sqrt{\frac{W''(0)}{2}}\right)^{2m+1+\nu} L^{1+2\nu} a_0(L)^2,
\]
where
\[
a_0(x) = \lim_{\varepsilon \rightarrow 0} \varepsilon^{2m} \exp\left( \int_\varepsilon^x \frac{2\sqrt{\tfrac{W''(0)}{2}}(2m + 1 + \nu)- \phi''(t) - (2\nu+1)t^{-1}\phi'(t) }{2\phi'(t)} \,dt \right).
\]
\end{theo}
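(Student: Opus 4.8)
The plan is to follow the same strategy used for Theorem \ref{theo:maintheo1}, since the proof of that result (which I take as established) must already set up a WKB/Agmon framework that transfers almost verbatim; the only genuinely new features here are the singular potential term $h^2(\nu^2-1/4)x^{-2}$ near $x=0$ and the single boundary $x=L$ instead of the two-sided $r_\pm$. First I would fix the $m$-th low-lying eigenfunction $u_m^0$ of $Q(\nu;h)$ on $(0,\infty)$ and construct its WKB expansion in the classically forbidden region to the right of the well, $u_m^0(x) \sim h^{\alpha} e^{-\phi(x)/h}\bigl(a_0(x) + h a_1(x) + \cdots\bigr)$ with $\phi(x) = \int_0^x\sqrt W$. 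Plugging this ansatz into $Q(\nu;h)u = \lambda_m^0 u$ and using $\lambda_m^0 = 2\sqrt{W''(0)/2}(2m+1+\nu)h + \mathcal O(h^2)$ from the harmonic approximation \eqref{eq:harmonicapproximation} gives the eikonal equation $(\phi')^2 = W$ and the first transport equation
\[
2\phi' a_0' + \phi'' a_0 + (2\nu+1)x^{-1}\phi' a_0 = 2\sqrt{\tfrac{W''(0)}{2}}(2m+1+\nu)a_0,
\]
whose solution is exactly the stated $a_0(x)$; the $x^{-1}$ term is the contribution of the centrifugal potential and explains the factor $L^{1+2\nu}$ and the $(2\nu+1)t^{-1}\phi'(t)$ in the exponent. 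The normalization of $a_0$ (the $\varepsilon^{2m}$ prefactor and the limit $\varepsilon\to0$) must be pinned down by matching to the harmonic-oscillator eigenfunction near $x=0$: there the model is $\widetilde Q_\nu$, whose $m$-th eigenfunction is a Laguerre-type function $\sim c\, x^{m+\nu+1/2}e^{-\sqrt{W''(0)/2}\,x^2/2h}$, and matching the WKB solution to this in an overlap region $h^{1/2}\ll x\ll 1$ fixes the constant — this matching is where the Gamma function $\Gamma(1+m+\nu)$, the $1/m!$, and the power of $\sqrt{W''(0)/2}$ enter $s_0(\nu)$.

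With the WKB approximation of $u_m^0$ in hand near $x=L$, I would derive the eigenvalue shift via the standard Green's/Wronskian identity: if $u_m^\Lambda$ is the corresponding Dirichlet eigenfunction of $Q_\Lambda(\nu;h)$, then integrating $(Q_\Lambda - \lambda_m^0)u_m^0$ against $u_m^\Lambda$ over $(0,L)$ and integrating by parts produces a boundary term at $x=L$ equal to $h^2 W(u_m^0, u_m^\Lambda)(L)$ where $W(\cdot,\cdot)$ is the Wronskian; the boundary term at $x=0$ vanishes because both functions satisfy the Friedrichs/Dirichlet condition \eqref{eq:boundarycondition} (one should check this carefully for $0<\nu<1$, where it is the genuinely delicate endpoint). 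This yields
\[
(\lambda_m^\Lambda - \lambda_m^0)\langle u_m^0, u_m^\Lambda\rangle = h^2\bigl(u_m^0(u_m^\Lambda)' - (u_m^0)' u_m^\Lambda\bigr)(L) = -h^2 u_m^0(L)(u_m^\Lambda)'(L),
\]
using $u_m^\Lambda(L)=0$. Since $u_m^\Lambda$ agrees with $u_m^0$ up to exponentially small errors on compact subsets away from $L$, both $\langle u_m^0,u_m^\Lambda\rangle$ and $(u_m^\Lambda)'(L)$ can be replaced, to the required precision, by quantities computed from the global WKB solution (the normalization $\|u_m^0\|=1$ is again governed by the harmonic-oscillator piece near $x=0$, giving the same $\Gamma$ and factorial). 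Evaluating $u_m^0(L)$ and $(u_m^0)'(L)\approx -h^{-1}\phi'(L)u_m^0(L) = -h^{-1}\sqrt{W(L)}\,u_m^0(L)$ from the WKB expansion produces the factor $e^{-2\phi(L)/h}$, the power $h^{-\nu-2m}$ (collecting $h^2$, the $h^{-1}$ from differentiation, and the powers of $h$ from the two normalizations and from the WKB prefactor $h^\alpha$), and the leading coefficient $s_0(\nu)$; carrying the WKB expansion to all orders and tracking the exponentially small remainder in the tunneling bound \eqref{eq:tunneling} upgrades this to the full asymptotic series $s(\nu;h)\sim\sum s_j(\nu)h^j$.

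The main obstacle I anticipate is the behavior at the singular endpoint $x=0$: one must show that the $x^{-2}$ term does not spoil the existence of a genuine (exact, not just formal) WKB solution globally, that the Friedrichs extension boundary condition \eqref{eq:boundarycondition} is compatible with the matching at $x=0$, and — most importantly — that the normalization constant of the WKB solution is correctly computed through the turning point near the origin where the naive WKB ansatz degenerates (because $\phi'(x)=\sqrt{W(x)}\to 0$ and $x^{-1}\phi'(x)$ has a finite limit $\sqrt{W''(0)/2}$, so the transport equation is regular-singular there). This is handled by the Langer-type substitution/comparison with the exact harmonic-oscillator-with-centrifugal-term model $\widetilde Q_\nu(\nu;h)$, whose eigenfunctions are explicit in terms of generalized Laguerre polynomials $L_m^{(\nu)}$; the identity $L_m^{(\nu)}(0) = \binom{m+\nu}{m} = \Gamma(m+\nu+1)/(\Gamma(\nu+1)m!)$ is what ultimately supplies the precise constant in $s_0(\nu)$. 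A secondary, more routine difficulty is bookkeeping the various powers of $h$ from the three sources (the $h^2$ from the operator, the normalization integral dominated by a region of size $h^{1/2}$, and the WKB amplitude), but this is purely computational once the structure above is in place.
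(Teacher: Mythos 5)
You take a genuinely different route from the paper. Instead of setting up the shooting function $G(\lambda)=u_\lambda(L)$, computing $G'(\lambda^0)$ by variation of parameters, and running the contraction-mapping iteration (Propositions~\ref{prop:frobWKB} and~\ref{lem:mainlemma2}), you derive the shift directly from the Green's/Wronskian identity
\[
(\lambda_m^\Lambda - \lambda_m^0)\langle u_m^0,u_m^\Lambda\rangle_{L^2(\Lambda)} = -h^2\,u_m^0(L)\,(u_m^\Lambda)'(L).
\]
This is the classical physics derivation cited in the historical remarks, and the WKB/Laguerre/Agmon ingredients you invoke are the same as the paper's. The approach is workable in principle, but as written it produces exactly half the correct answer. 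The error is the replacement $(u_m^\Lambda)'(L)\approx(u_m^0)'(L)\approx -h^{-1}\sqrt{W(L)}\,u_m^0(L)$. Near $x=L$ the Dirichlet eigenfunction $u_m^\Lambda$ is (to leading order in the tunneling-small quantities) a superposition of the recessive solution, which matches $u_m^0$, and the dominant solution $v^0$; the boundary condition $u_m^\Lambda(L)=0$ forces the two to cancel at $L$, but their derivatives there \emph{add}, since the recessive branch has logarithmic derivative $\approx -\phi'(L)/h$ and the dominant one $\approx +\phi'(L)/h$. Writing $u_m^\Lambda\approx u_m^0 - \bigl(u_m^0(L)/v^0(L)\bigr)v^0$ near $L$ gives
\[
(u_m^\Lambda)'(L)\approx (u_m^0)'(L) - \frac{u_m^0(L)}{v^0(L)}(v^0)'(L) \approx -2\,h^{-1}\sqrt{W(L)}\,u_m^0(L),
\]
so the shift is $\approx 2h\sqrt{W(L)}\,u_m^0(L)^2$. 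With $u_m^0(L)=N(\nu;h)^{-1}h^{-(1+\nu)/2-m}L^{1/2+\nu}a(L,h)e^{-\phi(L)/h}$ and $N_0(\nu)^2=\Gamma(1+m+\nu)\,m!/2$ from Proposition~\ref{prop:frobWKB}\eqref{itm:laguerrenorm}, this reproduces the stated $s_0(\nu)$; your version misses the factor of $2$.

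A secondary gap: to promote the leading coefficient to the full asymptotic series $s(\nu;h)\sim\sum_j s_j(\nu)h^j$ you must know $(u_m^\Lambda)'(L)$ to all polynomial orders in $h$ relative to $u_m^0(L)$. But $u_m^\Lambda$ solves the equation with the unknown eigenvalue $\lambda_m^\Lambda$, so its boundary derivative depends on the very quantity being computed. Breaking this circularity requires an implicit-function or iterative argument with quantitative bounds on first and second $\lambda$-derivatives --- exactly what Lemma~\ref{lem:frobgrowth} and the Newton scheme in the paper provide. Your proposal gestures at ``carrying the WKB expansion to all orders'' but does not address this. With the factor of $2$ corrected and an explicit iteration added, the argument could be completed.
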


\subsection{Some applications}

The simplest application of Theorem \ref{theo:maintheo1} is to the linear harmonic oscillator confined to a symmetric interval. Evaluating \eqref{eq:shift} to first order, we obtain the following corollary.

\begin{cor} \label{cor:semiclassicalho}
Let $V(x) = x^2$, so that $\lambda^0_m = (2m+1)h$. Let $\Omega(R) = (-R, R)$. Then for $R^{-2} h$ sufficiently small depending on $m$,
\begin{equation} \label{eq:semiclassicalho}
\lambda^{\Omega(R)}_m  = (2m+1)h + \frac{h^{\frac{1}{2}-m} 2^{2+m}}{m! \pi^{\frac12}} R^{2m+1} e^{-R^2/h}(1+\mathcal O(R^{-2}h)).
\end{equation}
\end{cor}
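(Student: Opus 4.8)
The plan is to specialize Theorem \ref{theo:maintheo1} to $V(x)=x^2$ and verify that the leading coefficient reduces to the stated constant. Here $V''(0)=2$, so $\sqrt{V''(0)/2}=1$, and the unperturbed eigenvalue is indeed $\lambda_m^0=(2m+1)h$. The WKB phase is $\phi(x)=\sgn x\int_0^x|t|\,dt=x^2/2$ (even, as it must be), whence $\phi(R)=\phi(-R)=R^2/2$ and the exponential in \eqref{eq:shift} becomes $e^{-2\phi(R)/h}=e^{-R^2/h}$ for both endpoints. Since the configuration is symmetric under $x\mapsto -x$, the two terms $\pm$ in \eqref{eq:shift} coincide, producing the overall factor of $2$.

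The main computation is to evaluate $a_0(R)$ explicitly. With $\phi'(t)=|t|$ and $\phi''(t)=\sgn t$, on $x>0$ the integrand in the definition of $a_0$ is
\[
\frac{\sqrt{V''(0)/2}(2m+1)-\phi''(t)}{2\phi'(t)}=\frac{(2m+1)-1}{2t}=\frac{m}{t},
\]
so $\int_{\varepsilon}^{x}\frac{m}{t}\,dt=m\log(x/\varepsilon)$ and hence $\exp(\cdots)=(x/\varepsilon)^m$. Multiplying by the prefactor $\varepsilon^m$ (taking $\sgn x=+1$) gives $a_0(x)=x^m$ with no residual $\varepsilon$-dependence, so $a_0(R)=R^m$ and $a_0(R)^2=R^{2m}$. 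Substituting into $s_0^\pm$ with $\sqrt{V(R)}=R$ and $\sqrt{V''(0)/2}=1$:
\[
s_0^\pm=\frac{2^{m+1}}{m!\,\pi^{1/2}}\cdot 1\cdot R\cdot R^{2m}=\frac{2^{m+1}}{m!\,\pi^{1/2}}\,R^{2m+1}.
\]
Adding the two identical endpoint contributions doubles this, so the leading term of $h^{1/2-m}\sum_\pm e^{-2\phi(r_\pm)/h}s_\pm(h)$ is $\frac{2^{m+2}}{m!\,\pi^{1/2}}\,h^{1/2-m}R^{2m+1}e^{-R^2/h}$, matching \eqref{eq:semiclassicalho}.

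Finally I must justify the error term $\mathcal O(R^{-2}h)$ and the smallness condition ``$R^{-2}h$ sufficiently small.'' Theorem \ref{theo:maintheo1} as stated gives $h_0=h_0(m)$ for each fixed $\Omega$; to obtain a bound uniform in $R$ with the natural scaling parameter $R^{-2}h$, I would rescale $x=Ry$, which conjugates $P_{\Omega(R)}(h)$ on $(-R,R)$ to $R^2(\,(h/R^2)^2D_y^2+y^2\,)$ on the fixed interval $(-1,1)$ — i.e., the same problem with effective semiclassical parameter $\tilde h=h/R^2$ and an overall factor $R^2$. Applying Theorem \ref{theo:maintheo1} with $V(y)=y^2$, $\Omega=(-1,1)$, and $\tilde h$ (for which $h_0$ depends only on $m$), then undoing the scaling, converts the asymptotic expansion $s_\pm(\tilde h)\sim\sum s_j^\pm\tilde h^j$ into the claimed relative error $\mathcal O(\tilde h)=\mathcal O(R^{-2}h)$. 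The only mild subtlety — and the place I would be most careful — is checking that this dilation is compatible with the Dirichlet boundary conditions and with the normalization conventions used in the proof of Theorem \ref{theo:maintheo1}, so that the one-term truncation of $s_\pm$ is legitimately governed by the next term in the expansion; everything else is the bookkeeping carried out above.
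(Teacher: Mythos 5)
Your proof is correct and follows essentially the same approach as the paper: rescale $x=Ry$ to pass to $\Omega(1)=(-1,1)$ with effective semiclassical parameter $\tilde h = R^{-2}h$, then apply Theorem~\ref{theo:maintheo1} (for which $h_0$ depends only on $m$) and undo the scaling. The paper's proof is simply the one-line rescaling observation plus the implicit computation you carried out explicitly; your closing worry about compatibility of the dilation with boundary conditions and normalizations is unfounded, since the dilation is an exact unitary conjugation sending $P_{\Omega(R)}(h)$ to $R^2 P_{\Omega(1)}(R^{-2}h)$, mapping Dirichlet data at $\pm R$ to Dirichlet data at $\pm 1$.
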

\begin{proof}
Set $x = Ry$. If $u(x)$ is an eigenvector of $P_{\Omega(R)}(h)$ with eigenvalue $\lambda^{\Omega(R)}_m$ then $\tilde{u}(y) := u(x) = u(Ry)$ is an eigenvector of $P_{\Omega(1)}(R^{-2} h )$ with eigenvalue $R^{-2}\lambda^{\Omega(R)}_m$. It remains to apply Theorem \ref{theo:maintheo1} with the effective semiclassical parameter $R^{-2} h$.
\end{proof}

A rigorous study of the semiclassical harmonic oscillator on a finite interval $\Omega \ni 0$ was previously performed in Bolley--Helffer \cite[Appendix 3]{Bolley:1993} with Neumann boundary conditions on the boundary of $\Omega$.

Analogously, Theorem \ref{theo:maintheo2} may be applied to the isotropic harmonic oscillator at a fixed angular momentum. 
\begin{cor} \label{cor:frobsemiclassicalho}
Let $W(x) = x^2$, so that $\lambda^0_m = 2(2m+1+\nu)h$. Then for $L^{-2} h$ sufficiently small depending on $m$,
\begin{equation} \label{eq:frobsemiclassicalho}
\lambda^\Lambda_m  = 2(2m+1+\nu)h +  \frac{ 4 h^{-2m - \nu} L^{2(2m + 1 + \nu)} }{m! \, \Gamma(1+m+\nu)}   e^{-L^2/h} \left ( 1+ \mathcal O(L^{-2} h) \right ).
\end{equation} 
\end{cor}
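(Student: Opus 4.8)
The plan is to derive Corollary \ref{cor:frobsemiclassicalho} from Theorem \ref{theo:maintheo2} by the same rescaling trick used for Corollary \ref{cor:semiclassicalho}. First I would set $x = Ly$ and observe that if $u(x)$ is an eigenvector of $Q_\Lambda(\nu;h)$ with eigenvalue $\lambda^\Lambda_m$, then $\tilde u(y) := u(Ly)$ is an eigenvector of $Q_{(0,1)}(\nu; L^{-2}h)$ with eigenvalue $L^{-2}\lambda^\Lambda_m$; the inverse-square term $h^2(\nu^2-1/4)x^{-2}$ scales correctly because the factor $h^2 x^{-2}$ becomes $(L^{-2}h)^2 y^{-2}$, and the boundary condition \eqref{eq:boundarycondition} is scale-invariant. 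For $W(x) = x^2$ one has $W(1) = 1$, $W''(0)/2 = 1$, and $\phi(x) = \int_0^x t\,dt = x^2/2$, so $\phi(1) = 1/2$ and $e^{-2\phi(1)/\tilde h} = e^{-1/\tilde h}$ with $\tilde h = L^{-2}h$.

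Next I would evaluate the leading coefficient $s_0(\nu)$ from Theorem \ref{theo:maintheo2} in this special case. With $L$ replaced by $1$, $W(1) = 1$, $\sqrt{W''(0)/2} = 1$, and $1^{1+2\nu} = 1$, the formula reduces to $s_0(\nu) = \dfrac{4}{\Gamma(1+m+\nu)\,m!}\, a_0(1)^2$, so the crux is computing $a_0(1)$. Here $\phi(t) = t^2/2$, hence $\phi'(t) = t$, $\phi''(t) = 1$, and $2\sqrt{W''(0)/2}(2m+1+\nu) = 2(2m+1+\nu)$. The integrand in the definition of $a_0$ becomes
\[
\frac{2(2m+1+\nu) - 1 - (2\nu+1)t^{-1}\cdot t}{2t} = \frac{2(2m+1+\nu) - 1 - (2\nu + 1)}{2t} = \frac{4m}{2t} = \frac{2m}{t},
\]
so $\exp\!\left(\int_\varepsilon^1 \frac{2m}{t}\,dt\right) = \exp(2m\log(1/\varepsilon)) = \varepsilon^{-2m}$, and therefore $a_0(1) = \lim_{\varepsilon\to 0}\varepsilon^{2m}\cdot \varepsilon^{-2m} = 1$. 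Thus $s_0(\nu) = \dfrac{4}{\Gamma(1+m+\nu)\,m!}$.

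Assembling these pieces, Theorem \ref{theo:maintheo2} applied with semiclassical parameter $\tilde h = L^{-2}h$ gives
\[
L^{-2}\lambda^\Lambda_m - L^{-2}\lambda^0_m = \tilde h^{-\nu - 2m}\, e^{-1/\tilde h}\left(s_0(\nu) + \mathcal O(\tilde h)\right) = (L^{-2}h)^{-\nu - 2m}\, e^{-L^2/h}\left(\tfrac{4}{\Gamma(1+m+\nu)m!} + \mathcal O(L^{-2}h)\right),
\]
and multiplying through by $L^2$ together with $\lambda^0_m = 2(2m+1+\nu)h$ and $L^2\cdot (L^{-2})^{-\nu-2m} = L^{2(1+\nu+2m)} = L^{2(2m+1+\nu)}$ yields \eqref{eq:frobsemiclassicalho}. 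The one point requiring a little care — the main (albeit minor) obstacle — is verifying that the hypotheses of Theorem \ref{theo:maintheo2} genuinely transfer under the rescaling: one must check that $W(x) = x^2$ satisfies properties \eqref{itm:first}--\eqref{itm:fifth} (it does, trivially, since all odd derivatives vanish), and that the smallness requirement "$h \in (0,h_0)$" for the interval $(0,1)$ translates into the stated condition "$L^{-2}h$ sufficiently small depending on $m$", which is exactly the content of the substitution $h \mapsto L^{-2}h$.
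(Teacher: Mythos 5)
Your proof is correct and follows exactly the paper's approach: the paper simply states that the proof is identical to that of Corollary~\ref{cor:semiclassicalho}, i.e.\ the rescaling $x = Ly$ reducing to Theorem~\ref{theo:maintheo2} with effective semiclassical parameter $L^{-2}h$. Your explicit verifications --- that the inverse-square term and the boundary condition~\eqref{eq:boundarycondition} transform correctly under the scaling, and the computation $a_0(1)=1$ giving $s_0(\nu) = 4/(\Gamma(1+m+\nu)\,m!)$ --- are the details the paper leaves implicit, and they check out.
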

\begin{proof}
The proof is identical to that of Corollary \ref{cor:semiclassicalho}.
\end{proof}

An interesting application of Corollary \ref{cor:frobsemiclassicalho} is to the Coulomb Hamiltonian at a fixed angular momentum $\ell$,
\begin{equation*} \label{eq:coulombhamiltonian}
h^2 D_y^2 + \frac{h^2 \ell(\ell+1)}{y^2} - \frac{Z}{y}.
\end{equation*}
With initial domain $C_c^\infty((0,\infty))$, this Hamiltonian is essentially self-adjoint for $\ell > 0$. When $\ell = 0$ the deficiency indices both equal one --- see \cite{Oliveira:2009} for an explicit description of all the self-adjoint extensions. In particular, imposing a Dirichlet boundary conditions at $x = 0$ gives a self-adjoint extension. With a Dirichlet boundary condition when $\ell = 0$, the corresponding operator is denoted by $H(\ell;h)$ for $\ell \geq 0$. It is well known that $H(\ell;h)$ is bounded from below, and has discrete spectrum in $(-\infty,0)$. The negative eigenvalues can be listed, 
\[
E_{\ell+1} < E_{\ell + 2} < \cdots < 0, \text{ where } E_n = -\frac{Z^2}{4 n^2h^2}, \quad n \geq \ell + 1.
\]
Now let $H_R(\ell;h)$ denote the self-adjoint operator with same action as $H(\ell;h)$ but with a Dirichlet boundary condition at $x = R$. Then $H_R(\ell;h)$ is also bounded below with discrete spectrum in $(-\infty,0)$ and the negative eigenvalues will be listed as
\[
E_{\ell + 1}(R) < E_{\ell + 2}(R) < \cdots < 0.
\]
The following result holds for the difference $E_{n}(R) - E_n$.

\begin{cor} \label{cor:hydrogenshift}
Fix $n \geq \ell +1$. For $R^{-1} h^2$ sufficiently small depending on $n$, 
\begin{equation} \label{eq:hydrogenshift}
E_n(R) = -\frac{Z^2}{4 n^2h^2} + \frac{ 2^{2n+1} h^{-4n - 2} R^{2n} }{n^{2n+3} (n-\ell - 1)! (n+ \ell)!} \left(\frac{2}{Z}\right)^{-2n-2}e^{-ZR/nh^2} \left( 1+ 
\mathcal O\left( h^2 R^{-1} \right) \right).
\end{equation}
\end{cor}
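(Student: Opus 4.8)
The plan is to deduce Corollary \ref{cor:hydrogenshift} from Corollary \ref{cor:frobsemiclassicalho} by the classical Liouville substitution that turns the Coulomb eigenvalue problem at fixed energy into a radial harmonic-oscillator problem. Specifically, fix $n\geq \ell+1$ and consider an eigenfunction of $H_R(\ell;h)$ with eigenvalue $E_n(R)<0$, say $E_n(R)=-\kappa^2$ with $\kappa>0$. Write $y = c\, x^2$ for a constant $c$ to be chosen, and set $\tilde u(x) = u(cx^2)$. A direct computation shows that after this change of variables and after clearing the resulting first-order term by conjugating with a power of $x$, the equation $\left(h^2 D_y^2 + h^2\ell(\ell+1)y^{-2} - Z/y\right)u = -\kappa^2 u$ becomes, on $(0, \sqrt{R/c})$, an equation of the form
\[
\left( \tilde h^2 D_x^2 + \tilde h^2(\nu^2 - 1/4)x^{-2} + x^2 \right)\tilde u = \mathcal{E}\, \tilde u,
\]
with $\nu = \ell + 1/2 = 2\ell+1$ (so that $2m+1+\nu$ matches the Coulomb quantum number pattern), an effective semiclassical parameter $\tilde h$ proportional to $\kappa h$, and effective eigenvalue $\mathcal E$ proportional to $Z/\kappa$. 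The choice of $c$ and the bookkeeping of constants is dictated by requiring the coefficient of $x^2$ to be $1$ and the $x^{-2}$ coefficient to have the correct form; this is a routine but careful computation.

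The key point is then that the Coulomb eigenvalue condition becomes \emph{exactly} the harmonic-oscillator eigenvalue condition $\mathcal E = 2(2m+1+\nu)\tilde h$ for the appropriate $m$, and the Dirichlet condition at $y=R$ becomes the Dirichlet condition at $x = L := \sqrt{R/c}$, while the boundary condition at $y=0$ translates to the Friedrichs/Dirichlet condition at $x=0$ (here one uses that $\ell\geq 0$ means $\nu\geq 1/2$, and for $\ell=0$, $\nu = 1$, so one is in the essentially self-adjoint regime $\nu\geq 1$ and no additional extension choice intervenes beyond what Corollary \ref{cor:frobsemiclassicalho} already handles). For the unconfined operator one recovers the exact relation $E_n = -Z^2/(4n^2h^2)$ with $n = m + \ell + 1$, i.e. $m = n - \ell - 1$; this also pins down the unperturbed value of $\tilde h$, namely $\kappa_0 h$ with $\kappa_0 = Z/(2nh)$, hence $\tilde h_0 = \tilde h_0(R\text{-independent})$ of size $\sim h^2$ and $L^2 = R/c \sim R$, so that the smallness hypothesis "$L^{-2}\tilde h$ small" becomes "$h^2 R^{-1}$ small" as stated.

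First I would carry out the substitution cleanly and record the precise dictionary $(\tilde h, \mathcal E, L)\leftrightarrow (h, E_n(R), R, Z, n, \ell)$, both for the confined and unconfined problems. Next I would invoke Corollary \ref{cor:frobsemiclassicalho} to get $\mathcal E^\Lambda_m - \mathcal E^0_m = 4\tilde h^{-2m-\nu}L^{2(2m+1+\nu)}(m!\,\Gamma(1+m+\nu))^{-1} e^{-L^2/\tilde h}(1 + \mathcal O(L^{-2}\tilde h))$ with $m = n-\ell-1$, $\nu = 2\ell+1$, hence $2m+1+\nu = 2n$ and $m+\nu = n+\ell$, $\Gamma(1+m+\nu) = (n+\ell)!$, $m! = (n-\ell-1)!$. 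Then I would translate $\mathcal E^\Lambda_m - \mathcal E^0_m$ back to $E_n(R) - E_n$ using the relation between $\mathcal E$ and $E$; since $\mathcal E$ depends on the eigenvalue itself (through $\kappa$), this translation is only to leading order — one uses that $E_n(R) - E_n$ is already exponentially small, so replacing $\kappa$ by $\kappa_0$ everywhere in the prefactor and in $L^2/\tilde h$ introduces only a relative error of the same exponentially small order (absorbed into the $1 + \mathcal O(h^2R^{-1})$), while the leading exponent $e^{-ZR/nh^2}$ and the explicit power-of-$R$, power-of-$h$, and combinatorial prefactor emerge from substituting $\tilde h_0 = Zh/(2n)$ and $L^2 = R/c$ into the formula.

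The main obstacle — really the only delicate point — is the self-referential nature of the substitution: the effective semiclassical parameter $\tilde h \propto \kappa h$ and the interval length enter the exponent $e^{-L^2/\tilde h}$, and $\kappa$ itself is the unknown eigenvalue. The resolution is a standard perturbative fixed-point argument: one shows that the map sending a trial $\kappa$ to the $\kappa'$ determined by the harmonic-oscillator eigenvalue relation is a contraction on a small neighborhood of $\kappa_0$ whose diameter is exponentially small in $1/h$, so that $E_n(R)$ is well-defined and equals $E_n$ plus the asserted leading term plus a relative correction of order $h^2R^{-1}$. Equivalently, and more simply, one differentiates the exact relation $\mathcal E = \mathcal E(\kappa)$, notes $d\mathcal E/d\kappa$ is bounded away from zero, and concludes that the exponentially small shift in $\mathcal E$ produces an exponentially small — and to leading order proportional — shift in $E_n(R)$; plugging in the unperturbed values in the prefactor is then legitimate up to the stated error. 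All remaining work is arithmetic: collecting the powers of $2$, $Z$, $n$, $R$, and $h$ to match the right-hand side of \eqref{eq:hydrogenshift}.
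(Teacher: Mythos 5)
Your overall strategy matches the paper's: recast the confined Coulomb problem as a confined radial harmonic oscillator via a Liouville-type substitution $y\mapsto x^2$, identify $\nu=2\ell+1$ and $m=n-\ell-1$, invoke Corollary~\ref{cor:frobsemiclassicalho}, and then translate back. You also correctly flag the one genuinely delicate point: the substitution is self-referential because the effective interval length $L$ and the effective eigenvalue both depend on the unknown $k(R)=(-E_n(R))^{-1/2}$. (As a minor aside, the line ``$\nu=\ell+1/2=2\ell+1$'' is a slip; the paper's general radial setup uses $\nu=\ell+1/2$ but the Coulomb transform forces $\nu=2\ell+1$.)

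However, your proposed resolution of the self-referential issue is not adequate as stated, and this is precisely where the paper does real work. You suggest a contraction argument on a small neighborhood of $\kappa_0=nh$, or alternatively differentiating the exact relation $\mathcal E=\mathcal E(\kappa)$. Both arguments presuppose that you are already allowed to invoke Corollary~\ref{cor:frobsemiclassicalho}, whose hypothesis is that $L^{-2}h$ is small; but since $L^2=2R/k(R)$, this hypothesis is itself the assertion $k(R)=\mathcal O(h)$ that you are trying to prove. A contraction map produces a unique self-consistent root near $\kappa_0$, but it does not identify that root with the actual $n$-th negative eigenvalue $E_n(R)$ of $H_R(\ell;h)$: conceivably $E_n(R)$ could sit in a regime where $k(R)h/R$ is not small and Corollary~\ref{cor:frobsemiclassicalho} simply does not apply. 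Likewise, one cannot ``differentiate the exact relation and conclude the shift is exponentially small'' without first knowing one is in the semiclassical regime. The paper closes this gap with an independent a priori estimate: it characterizes $k(R)$ via the $(m+1)$-th positive zero of the Whittaker function $M_{h^{-1}k(R),\,\ell+1/2}$, and uses the Gabutti--Gatteschi asymptotics for those zeros to rule out $h^{-1}k(R)\to\infty$, hence $L^{-2}h=\mathcal O(h^2R^{-1})\to 0$. Only \emph{after} this a priori control does the self-consistent bootstrap (plug in $k(R)\approx nh$ into $L$, iterate once) become legitimate. If you prefer to avoid special functions, you would need to substitute some other a priori argument that locates $E_n(R)$ near $E_n$ --- for instance a form-domain monotonicity / eigenfunction-decay argument --- but something must be supplied; the fixed-point machinery alone cannot bootstrap itself into the correct regime.
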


\begin{proof}By rescaling, it may be assumed that $Z=2$. For a negative number $E<0$, let $k = (-E)^{-1/2}$. The $k$-dependent mapping
\[
\left (y \mapsto f(y) \right) \mapsto \left(x \mapsto x^{-1/2} f\left( 2 k h^{-1} x^2 \right) \right)
\]
maps the kernel of $H_R(\ell;h) - E(R)$ onto the kernel of $Q_{\Lambda}(2\ell + 1;h) - 4 k(R)$ (taking into account boundary conditions near the origin), where $L^2 = 2 R k(R)^{-1}$. 

We would like to formally apply Corollary \ref{cor:frobsemiclassicalho} to the operator $Q_{\Lambda}(2\ell + 1;h)$ to find an expression for $4k(R)$ in terms of the eigenvalues of $Q(2\ell + 1;h)$. To do this, it must first be verified that $L^{-2} h \rightarrow 0$ as $R^{-1}h^2 \rightarrow 0$; this is not immediately obvious since $L$ depends implicitly on $k(R)$, which is what we are trying to calculate in the first place. However, one has the following a priori information:

Suppose that $k(R) > 0$ corresponds to the $m$'th negative eigenvalue of $H_R(\ell;h)$. The claim is that $h^{-1} k(R) = \mathcal{O}(1)$ as $h^{-2}R \rightarrow \infty$. To prove this, note that $k(R)$ is characterized by the fact that
\[ 
y = 2R h^{-1} k(R)^{-1} \text{ is the $(m+1)$'th positive zero of } M_{h^{-1}k(R),\ell + 1/2}(y),
\]
where $M_{\kappa,\mu}$ is the Whittaker $M$-function. If the claim did not hold, then $h^{-1}k(R) \rightarrow \infty$
along some sequence of $h^{-2} R$ tending to infinity. Now the $r$'th zero of $M_{\kappa,\mu}$ as $\kappa \rightarrow \infty$ is given by $\alpha_{\mu,r}\kappa^{-1} + \mathcal O(\kappa^{-3/2})$, where $\alpha_{\mu,r} > 0$ is fixed \cite{Gabutti:2001}. If $\alpha := \alpha_{\ell+1/2,m+1}$, then along this sequence
\[
\frac{2R}{ h k(R)} \sim \frac{\alpha h}{ k(R)},   
\]
which is a contradiction since it implies $R h^{-2} = \mathcal{O}(1)$.
 
This shows that a priori, $L^{-2}h = \mathcal{O}( h^2 R^{-1} )$, and hence $L^{-2}h \rightarrow 0$. Applying Corollary \ref{cor:frobsemiclassicalho}, we find that for $n \geq \ell + 1$,
\begin{equation} \label{eq:hydrogenimplicit}
k(R) = n h + \frac{  h^{n-2} L^{4n}}{(n-\ell-1)! \, (n + \ell)! }   e^{-L^2/h} \left ( 1+ \mathcal O\left(L^{-2} h \right) \right )  .
\end{equation}
Therefore as a first approximation
\[
k(R) = nh + \mathcal O\left(\exp\left(-R/Ch^2 \right)\right),
\] 
and hence 
\[
L^{-2}   = 2^{-1} k(R) R^{-1} = 2^{-1} nh R^{-1} \left ( 1+ \mathcal O\left (\exp(-R/C_1 h^2 \right) \right).
\]
Plugging this back into \eqref{eq:hydrogenimplicit},
\[
k(R) = nh + \frac{ 2^{2n} h^{-4n + 1} R^{2n} } {n^{2n}(n-\ell - 1)! (n+ \ell)! } e^{-2R/nh^2} \left( 1 + \mathcal O\left (h^2 R^{-1} \right) \right).
\]
Now solve for $E(R) = -k(R)^{-2}$ to get that
\[
E_n(R) = -(nh)^{-2} + \frac{ 2^{2n+1} h^{-4n - 2} R^{2n} }{n^{2n+3} (n-\ell - 1)! (n+ \ell)!} e^{-2R/nh^2} \left( 1+ \mathcal O\left (h^2 R^{-1} \right) \right).
\]
\end{proof}

\subsection{Historical remarks}
The study of confined quantum mechanical systems has a long tradition --- the reader is referred to the articles of Fr\"oman et al \cite{Froman:1987}, Aquino \cite{Aquino:2009} and references therein for a comprehensive overview and physical applications. However, it should be stressed that few rigorous results appear in these reviews. Historically, the case of a hydrogen atom confined in a spherical box was the first problem of this type to be considered. Some of the earliest works in this direction are due to Michels et al \cite{Michels:1937}, Sommerfeld and Welker \cite{Sommerfeld:1938}, de Groot and ten Seldam \cite{deGroot:1946} in the physics literature. 

The formula \eqref{eq:hydrogenshift} for hydrogen was previously derived in the works of Dingle \cite{Dingle:1953}, Julius and Hull \cite{Hull:1956}, Singh \cite{Singh:1961}, and Laughlin et al \cite{Laughlin:2002}. However, the arguments used to derive these results can not be considered complete proofs. As far as we are aware, Corollary \ref{cor:hydrogenshift} provides the first rigorous proof of this result.

Formula \eqref{eq:frobsemiclassicalho} for the isotropic harmonic oscillator appears also in \cite{Hull:1956,Singh:1961}. For the linear harmonic oscillator, \eqref{eq:semiclassicalho} was given by Singh \cite{Singh:1959}, and also in \cite{Hull:1956}. Again, these results are not accompanied by rigorous proof. For large quantum numbers (as opposed to the low-lying states considered here), the same formula was also derived by Auluck and Kothari \cite{Auluck:1945} modulo an incorrect factor of $\frac{1}{2}$.

\begin{rem} The aforementioned works give asymptotic formulas as the radius of confinement tends to infinity. By the scaling properties of the linear harmonic oscillator, isotropic harmonic oscillator, and hydrogen atom, these are equivalent to confinement in a box of \emph{fixed} size in the \emph{semiclassical} limit, hence our results apply. For more general potentials in the semiclassical limit (confined to a box of fixed size), Theorems \ref{theo:maintheo1}, \ref{theo:maintheo2} appear to be new. 
\end{rem}

\subsection{Idea of proof} \label{subsect:idea}
Let us briefly describe the strategy used to prove Theorem \ref{theo:maintheo1}. Since $P(h)$ is well approximated by the harmonic oscillator $\widetilde{P}(h)$ near $x=0$, if $u^0$ is an $m$'th eigenvector of $P(h)$ with eigenvalue $\lambda^0$ it is reasonable to expect that \begin{equation} \label{eq:parity}
u^0(0) \neq 0 \text{ if $m$ is even}; \quad (u^0)'(0) \neq 0 \text{ if $m$ is odd},
\end{equation}
since this holds for the eigenvectors of $\widetilde{P}(h)$. The same observation also holds for $P_\Omega(h)$: if $u^\Omega$ is an $m$'th eigenvector of $P_\Omega(h)$ with eigenvalue $\lambda^\Omega$, then $u^\Omega$ should also satisfy \eqref{eq:parity}. It will follow from the WKB construction in Proposition \ref{prop:WKB} that both of these expectations are indeed true.

Fix any $m$'th eigenfunction of $P(h)$ which is polynomially bounded in $h$, namely $\| u^0 \|_{L^2(\mathbb{R})} = \mathcal{O}(h^{-N})$ for some $N>0$. Depending on the parity of $m$, define $u_{\lambda,\beta}$ as the unique nonzero solution to the equation
\[
- h^2 u_{\lambda,\beta}'' + V u_{\lambda,\beta} = \lambda u_{\lambda,\beta},
\]
subject to the initial conditions
\begin{equation*}
u_{\lambda,\beta}(0) = \begin{cases} u^0(0) \text{ if $m$ is even}, \\ \beta \text{ if $m$ is odd}, \end{cases} \quad
u'_{\lambda,\beta}(0) = \begin{cases} \beta \text{ if $m$ is even}, \\ (u^0)'(0) \text{ if $m$ is odd}. \end{cases} 
\end{equation*}
If $\lambda^0$ is the $m$'th eigenvalue of $P(h)$, then of course
\begin{equation} \label{eq:betadefinition}
\text{there exists $\beta^0$ such that } u_{\lambda^0,\beta^0} = u^0.
\end{equation}
Keeping in mind the dependence on $m \geq 0$ and a choice of $u^0$, define $G_\pm(\lambda,\beta) := u_{\lambda,\beta}(r_\pm)$, and then set 
\begin{equation} \label{eq:textbfG}
\mathbf{G}(\lambda,\beta) := \begin{bmatrix} G_+(\lambda,\beta) \\ G_-(\lambda,\beta) \end{bmatrix}.
\end{equation}
The equation $\mathbf{G}(\lambda,\beta) = (0,0)$ is solved by showing that the fixed point iteration
\begin{equation} \label{eq:iteration}
(\lambda^{i+1},\beta^{i+1}) = \mathbf{F}(\lambda^{i},\beta^i) := (\lambda^{i},\beta^{i}) - D\mathbf{G}(\lambda^{0},\beta^{0})^{-1} \mathbf{G}(\lambda^{i},\beta^{i})
\end{equation}
converges to some $(\lambda^\star,\beta^\star)$. We show that $\lambda^\star = \lambda^\Omega$ and then find an asymptotic expansion for $\lambda^\Omega - \lambda^0$.

The same strategy applies to $Q_\Lambda(h)$. Given $\lambda$ and  $\alpha$, there is a unique solution $u_{\lambda}$ to the equation
\begin{equation*}
- h^2 u_{\lambda}'' + h^2(\nu^2 -1/4)x^{-2} u_\lambda + W u_{\lambda} = \lambda u_{\lambda}
\end{equation*}
satisfying $u(x) \sim \alpha x^{1/2+\nu}$ as $x\rightarrow 0$. Fix an $m$'th eigenvector $u^0$ of $Q(\nu;h)$ with eigenvalue $\lambda^0$ satisfying $\| u^0 \|_{L^2((0,\infty))} = \mathcal{O}(h^{-N})$ for some $N>0$, and set 
\[
\alpha = \lim_{x\rightarrow 0} x^{-1/2-\nu}u^0(x).
\] Notice that  $u_{\lambda^0} = u^0$. Define $G(\lambda) = u_\lambda(L)$; this equation is solved by the fixed point iteration
\begin{equation} \label{eq:frobiteration}
\lambda^{i+1} = F(\lambda^i) := \lambda^i - G'(\lambda^0)^{-1} G(\lambda^i).
\end{equation}
Again we show that there exists $\lambda^\star$ such that $\lambda^{i} \rightarrow \lambda^\star$, and moreover that $\lambda^\star = \lambda^\Lambda$, where $\lambda^\Lambda$ is the $m$'th eigenvalue of $Q_\Lambda(\nu;h)$.

\section{Proof of Theorem \ref{theo:maintheo1}}

First observe that by a scaling argument it may be assumed that $V''(0) = 2$: it suffices to replace $V(x)$ with $\widetilde{V}(x) = V(\sqrt{2/V''(0)} \,x)$ and define a new semiclassical parameter $\tilde{h} = \sqrt{V''(0)/2}\,h$. Then the original eigenvalue problem is equivalent to
\[
\left( \tilde{h}^2D_x^2 + \widetilde{V} -E \right) u = 0
\]
with Dirichlet boundary conditions imposed on the boundary of 
\[
\widetilde{\Omega} = (\sqrt{V''(0)/2}\,r_-, \sqrt{V''(0)/2}\,r_+),
\]
where now $\widetilde{V}''(0) = 2$.

Fix an integer $m \geq 0$. Let $\lambda^0$ denote the $m$'th eigenvalue of $P(h)$ and $\lambda^\Omega$ the $m$'th eigenvalue of $P_\Omega(h)$. Let $\beta^0$ be given by \eqref{eq:betadefinition}. As explained in \ref{subsect:idea}, we show that the iterates of $\textbf{F}$ (see \eqref{eq:iteration}) starting with the initial guess $(\lambda^0,\beta^0)$ converge.

\subsection{WKB construction for $P(h)$}

We need to fix a normalization for the eigenfunction $u^0$ of $P(h)$ and then find a tractable approximation to $u^0$. This comes from the WKB construction at a nondegenerate potential minimum. For $P(h)$, this now-standard result is discussed \cite[Chap. 3]{Dimassi:1999}; the points \eqref{itm:hermite}, \eqref{itm:hermitenorm}, \eqref{itm:a0} in Proposition \ref{prop:WKB} below are particular to one dimension, and do not appear explicitly in \cite[Chap. 3]{Dimassi:1999}. Since the complete proof of a very similar result is given in Proposition \ref{prop:frobWKB} of Section \ref{subsect:frobWKB} below, the proof is not indicated for Proposition \ref{prop:WKB}; the interested reader may then complete the proof of Proposition \ref{prop:WKB} by the same methods used to establish Proposition \ref{prop:frobWKB}.

\begin{prop} \label{prop:WKB}
Fix an integer $m \geq 0$ and $\Omega' \supset \Omega$ a bounded open interval. Define $\phi \in C^\infty(\Omega')$ by 
\[
\phi(x) = \sgn x \int_0^x \sqrt{V}(t) dt.
\]
There exists $a_{j}(x) \in C^\infty(\Omega'), \ j \in \mathbb{N}_{\geq 0}$ with $a_0(x) = x^m + \mathcal O(x^{m+1})$, and $a(x,h) \in C^\infty(\Omega')$ with $a(x,h) \sim \sum_{j \geq 0} h^j a_j(x)$, satisfying the following properties.
\begin{enumerate} \itemsep4pt
\item For each compact $K \subset \Omega'$,
\[
\left( P(h) - \lambda^0 \right ) a e^{-\phi/h} = \mathcal{O}_K(h^\infty)e^{-\phi/h}.
\]

\item \label{itm:hermite} There exists $b_j(x)\in C^\infty(\Omega')$ for $0 \leq 2j \leq m$, such that 
\[
\sum_{0\leq 2j \leq m} h^j a_j(x) = 2^{-m}h^{m/2}H_m(h^{-1/2}x) + \sum_{0 \leq 2j \leq m} h^j x^{m-2j+1} b_j(x),
\] 
where $H_m(y)$ is the Hermite polynomial of degree $m$.

\item \label{itm:hermitenorm}
Define $N(h) = \| h^{-1/4} h^{-m/2} a e^{-\phi/h} \|_{L^2(\Omega)}$. Then $N(h)$ admits an asymptotic expansion
\[
N(h) \sim  \sum_{j = 0}^\infty{N_j h^j}, \quad N_0^2 = 2^{-m} m! \sqrt{\pi}.
\]

\item \label{itm:a0} Explicitly,
\[
a_0(x) = \lim_{\varepsilon \rightarrow 0}\, (\varepsilon \sgn x)^m \exp{\left(\int_{\varepsilon\sgn x}^{x} \frac{2m + 1 - \phi''(t)}{2\phi'(t)} dt\right)} = x^m A_0(x),
\]
for some $A_0 \in C^\infty(\Omega')$ with $A_0(x) > 0$.

\item \label{itm:close} Associated with the WKB approximation $h^{-m/2}ae^{-\phi/h}$ is an eigenvector $u^0$ of $P(h)$ satisfying
\[
h^{-m/2}a(x,h)e^{-\phi(x)/h} - u^0(x)  =\mathcal{O}(h^\infty) e^{-\phi(x)/h}, \quad x \in K,
\]
for each compact $K \subset \Omega'$.
\end{enumerate}

\end{prop}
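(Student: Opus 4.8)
The statement to prove is Proposition~\ref{prop:WKB}, the WKB construction at a nondegenerate minimum. Here is the plan.

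\textbf{Plan of proof.} The construction proceeds by the standard complex WKB / Carleman ansatz. We seek a formal solution of $(P(h)-\lambda^0)(ae^{-\phi/h})=0$ in the form $a(x,h)\sim\sum_{j\geq 0}h^j a_j(x)$, where $\lambda^0$ is first replaced by its known asymptotic expansion $\lambda^0\sim(2m+1)h+\sum_{j\geq 2}\lambda^0_j h^j$ coming from \eqref{eq:harmonicapproximation}. Plugging in and using $\phi'=\sqrt V$ (so $(\phi')^2=V$), the $h^0$ terms cancel identically by the eikonal equation $(\phi')^2=V$. Collecting powers of $h$, the coefficient of $h^{k+1}$ gives a first-order linear transport equation for $a_k$,
\[
2\phi' a_k' + \big(\phi'' - (2m+1)\big)a_k = \phi''_{<k}\text{-terms involving } a_0,\dots,a_{k-1},
\]
with the inhomogeneity a smooth function built from lower-order $a_j$ and the $\lambda^0_j$. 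The homogeneous version is solved by the integrating factor that appears in \eqref{itm:a0}; the only delicacy is the apparent singularity of $1/\phi'(t)$ at $t=0$, where $\phi'(t)=\sqrt{V(t)}$ vanishes like $|t|$ (since $V''(0)=2$ forces $\sqrt V(t)=|t|+\mathcal O(t^2)$). One checks that $(2m+1-\phi'')/(2\phi')$ has a simple pole at $0$ with residue $m$, so that the integrating factor behaves like $|x|^m$; choosing the normalization exactly as in the limit defining $a_0$ produces a genuinely smooth $a_0(x)=x^m A_0(x)$ with $A_0>0$, and solving the inhomogeneous transport equations recursively (with the analogous normalization at each stage, choosing the free constant and using the constants $\lambda^0_j$ to kill the obstruction to smoothness across $x=0$) yields smooth $a_j$ on $\Omega'$. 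This gives (1) and \eqref{itm:a0}; the remainder being $\mathcal O_K(h^\infty)$ follows from Borel summation of the series $\sum h^j a_j$ to an honest $a(x,h)$.

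\textbf{Points \eqref{itm:hermite}, \eqref{itm:hermitenorm}.} These are the one-dimensional refinements. For \eqref{itm:hermite}, one compares the transport hierarchy near $x=0$ with the exact harmonic-oscillator problem: for $\widetilde P(h)=h^2D_x^2+x^2$ the $m$'th eigenfunction is $H_m(h^{-1/2}x)e^{-x^2/2h}$ up to normalization, and $\phi(x)=x^2/2$ exactly in that case. Matching Taylor coefficients at $x=0$ order by order (the eikonal phase $\phi$ agrees with $x^2/2$ to second order, and the transport equations agree to the relevant order in $x$), one shows that $\sum_{0\leq 2j\leq m}h^j a_j(x)$ and $2^{-m}h^{m/2}H_m(h^{-1/2}x)$ have the same jet at $x=0$ up to the stated error divisible by $x^{m-2j+1}$; this is essentially a bookkeeping argument on degrees, using that $H_m$ has the same parity as $m$. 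For \eqref{itm:hermitenorm}, we compute $N(h)^2=\int_\Omega h^{-1/2}h^{-m}|a|^2 e^{-2\phi/h}\,dx$ by Laplace's method at the unique minimum $x=0$ of $\phi$: rescale $x=h^{1/2}y$, use $\phi(h^{1/2}y)/h=\tfrac12 y^2+\mathcal O(h^{1/2}y^3)$ and $a(h^{1/2}y,h)\sim h^{m/2}(2^{-m}H_m(y)+\dots)$ from \eqref{itm:hermite}, so the leading term is $2^{-m}\int_{\mathbb R}H_m(y)^2 e^{-y^2}\,dy=2^{-m}\cdot 2^m m!\sqrt\pi/... $ — wait, $\int H_m^2 e^{-y^2}=2^m m!\sqrt\pi$, giving $N_0^2=2^{-2m}\cdot 2^m m!\sqrt\pi=2^{-m}m!\sqrt\pi$ as claimed; higher terms come from the full asymptotic expansion of the Laplace integral. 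Finally \eqref{itm:close} is the statement that the Borel-summed WKB quasimode is $\mathcal O(h^\infty)e^{-\phi/h}$-close to a true eigenfunction: since $(P(h)-\lambda^0)(h^{-m/2}ae^{-\phi/h})=\mathcal O_K(h^\infty)e^{-\phi/h}$ and $P(h)$ has a spectral gap of size $\sim h$ around $\lambda^0$ among low-lying eigenvalues, the resolvent estimate plus Agmon-type exponential decay of the true eigenfunction (which decays like $e^{-\phi/h}$ by tunneling estimates) upgrades the quasimode error to the pointwise statement; one fixes the normalization of $u^0$ so that the agreement holds without a multiplicative constant.

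\textbf{Main obstacle.} The genuinely delicate point is the regularity of the $a_j$ across the turning-point-like singularity at $x=0$: the transport equations are singular ODEs and only a specific choice of integration constant — together with the correct values $\lambda^0_j$ of the eigenvalue expansion — makes the solution smooth rather than merely $C^\infty$ on each half-line. This is exactly where assumption $m\geq 0$ integer and the structure of the Hermite polynomials enter, and it is the content that "does not appear explicitly in \cite[Chap. 3]{Dimassi:1999}". I would handle it by working in the variable adapted to $\phi$ (or equivalently expanding everything in Taylor series at $0$ and solving the recursion for the coefficients), which reduces smoothness to a solvability condition at each order that is met precisely because $\lambda^0$ is an actual eigenvalue of $\widetilde P(h)$ to the relevant order. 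The paper defers this to the fully analogous Proposition~\ref{prop:frobWKB}, and I would do the same, transcribing that argument with $\nu=-1/2$ (no inverse-square term) and symmetric rather than half-line domain.
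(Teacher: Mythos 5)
Your proposal is correct and follows essentially the same route the paper intends: the paper declines to write out a proof of Proposition~\ref{prop:WKB}, instead instructing the reader to transcribe the proof of the fully analogous Proposition~\ref{prop:frobWKB} (formal power-series solution of the transport hierarchy near $x=0$, Borel summation, Hermite/Laguerre jet-matching for item~\eqref{itm:hermite}, Laplace's method for item~\eqref{itm:hermitenorm}, and Agmon estimates for item~\eqref{itm:close}), which is exactly your plan. The only slight imprecision is your closing remark that this amounts to setting $\nu=-1/2$: that substitution captures only even $m$ (the odd case corresponds to $\nu=1/2$), so one must instead carry the $\sgn x$ factor and the parity bookkeeping you already describe in the body of the argument rather than literally specializing $\nu$.
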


\noindent Choose $u^0$ satisfying \eqref{itm:close} of Proposition \ref{prop:WKB}. Thus
\[
u^0(x) = \left( h^{-m/2}a(x,h) + \delta(x,h) \right) e^{-\phi(x)/h},
\]
where $\delta(x,h) = \mathcal O(h^\infty)$ uniformly on $\Omega$. Furthermore, \eqref{itm:hermite} of Proposition \ref{prop:WKB} verifies the claim made in \eqref{eq:parity} about the values of $u^0, (u^0)'$ at $x=0$ depending on the parity of $m$. Recall that if $\lambda^0$ is the eigenvalue associated to $u^0$, then there exists a unique $\beta^0$ such that
\[
u^0 = u_{\lambda^0,\beta^0},
\]
where $u_{\lambda,\beta}$ is defined as in Section \ref{subsect:idea}. Given one of the subscripts $\alpha \in \{ \lambda, \beta \}$ write
\[
\partial_{\alpha} u^0(x) := \partial_\alpha u_{\lambda,\beta}(x)|_{\lambda = \lambda^0,\beta = \beta^0},
\]
noting that $u_{\lambda,\beta}$ is smooth in the parameters $(\lambda,\beta)$ by standard results from ordinary differential equations.

\subsection{Variation of parameters I} \label{subsect:variationofparam1}

The first task is to compute 
\[
D G_\pm(\lambda^0,\beta^0) = \begin{bmatrix}
\partial_\lambda u^0(r_\pm), & \partial_\beta u^0(r_\pm) \end{bmatrix},
\]
For this we use the variation of parameters formula: suppose that $v^0$ is a complementary solution to the equation $(P(h) - \lambda^0)v^0 = 0$ satisfying $\mathcal{W}(u^0,v^0) = 1$ (here $\mathcal{W}$ denotes the Wronskian). 
Then 
\begin{equation} \label{eq:variationofparameters}
\partial_{\alpha} u^0 (x) =  \mathcal{W}(\partial_\alpha u^0,v^0)(x)u^0(x) - \mathcal{W}(\partial_\alpha u^0,u^0)(x) v^0(x), \quad  \alpha \in \{ \lambda,\beta \}.
\end{equation}
To define the complementary solution $v^0$, first we need a positivity result.

\begin{lem} \label{lem:positivity}
There exists $M > 0$ such that $|u^0(x)| > 0$ for $x\in \Omega \setminus (-Mh^{1/2},Mh^{1/2})$.
\end{lem}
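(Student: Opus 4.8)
The plan is to use the WKB approximation from Proposition \ref{prop:WKB}, which says that on any compact $K \subset \Omega'$ we have $u^0(x) = (h^{-m/2}a(x,h) + \delta(x,h))e^{-\phi(x)/h}$ with $\delta = \mathcal{O}(h^\infty)$ and $a \sim \sum_j h^j a_j$. Since the exponential factor $e^{-\phi(x)/h}$ never vanishes, it suffices to show that the amplitude $h^{-m/2}a(x,h) + \delta(x,h)$ is bounded away from zero on $\Omega \setminus (-Mh^{1/2}, Mh^{1/2})$ for a suitable $M$. By point \eqref{itm:a0} of Proposition \ref{prop:WKB}, $a_0(x) = x^m A_0(x)$ with $A_0 \in C^\infty(\Omega')$ strictly positive; in particular $|a_0(x)| \geq c|x|^m$ for some $c>0$ uniformly on the compact closure $\overline\Omega$. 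Thus the leading term $h^{-m/2}a_0(x)$ has size $\gtrsim h^{-m/2}|x|^m$, which for $|x| \geq Mh^{1/2}$ is $\gtrsim c M^m$.

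The key steps, in order: first, fix $\Omega'$ a bounded open interval with $\overline\Omega \subset \Omega'$ and apply Proposition \ref{prop:WKB} with $K = \overline\Omega$, so all error bounds are uniform on $\overline\Omega$. Second, write $a(x,h) = a_0(x) + h r(x,h)$ where $r(x,h) = \mathcal{O}(1)$ uniformly on $\overline\Omega$ (this follows from the asymptotic expansion $a \sim \sum h^j a_j$ with all $a_j \in C^\infty(\Omega')$); then
\[
h^{-m/2}a(x,h) + \delta(x,h) = h^{-m/2}\bigl( a_0(x) + h\, r(x,h) + h^{m/2}\delta(x,h)\bigr),
\]
and since $h^{m/2}\delta = \mathcal{O}(h^\infty) = \mathcal{O}(h)$, the bracketed quantity equals $a_0(x) + \mathcal{O}(h)$ uniformly on $\overline\Omega$. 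Third, using $|a_0(x)| \geq c|x|^m$, for $|x| \geq M h^{1/2}$ we get $|a_0(x)| \geq c M^m h^{m/2}$. If $m \geq 1$, choose $M$ large enough that $cM^m > 1$; then on the region $|x| \geq Mh^{1/2}$ we still need the $\mathcal{O}(h)$ error to be dominated, but note $|a_0(x)| \geq c|x|^m \geq c(Mh^{1/2})^{m-1}|x| \geq cM^{m-1}h^{(m-1)/2}|x|$ only helps near the origin; the cleaner route is to observe that on the fixed-size region $|x| \geq \eta > 0$ (for any fixed $\eta$) one has $|a_0(x)| \geq c\eta^m > 0$, so the $\mathcal{O}(h)$ term is negligible for $h$ small, while on $Mh^{1/2} \leq |x| \leq \eta$ one uses the factorization $a_0(x) = x^m A_0(x)$ directly: there $|a_0(x)| \geq (\min_{\overline\Omega} A_0)|x|^m \geq (\min A_0) M^m h^{m/2}$ while $|h\,r(x,h)| \leq Ch \leq C h \cdot (|x|/(Mh^{1/2}))^m \cdot$ — more simply, divide through by $x^m$: $x^{-m}(h^{-m/2}a + \delta) = h^{-m/2}(A_0(x) + \mathcal{O}(h/x^m))$, and for $|x| \geq Mh^{1/2}$ the error $\mathcal{O}(h/x^m) = \mathcal{O}(h/(M^m h^{m/2})) = \mathcal{O}(h^{1-m/2}M^{-m})$. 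For $m \leq 1$ this is $\mathcal{O}(h^{1/2})\to 0$ and we are done; for $m \geq 2$ we instead keep more terms and use point \eqref{itm:hermite}, which expresses $\sum_{2j\leq m} h^j a_j$ in terms of the Hermite polynomial $2^{-m}h^{m/2}H_m(h^{-1/2}x)$ plus lower-order-vanishing remainders. Rescaling $y = h^{-1/2}x$, the leading amplitude becomes (up to the benign factor $h^{m/2}$ and $x^{m-2j+1}b_j$ corrections) proportional to $H_m(y)$, whose zeros are all in a bounded set $\{|y| \leq c_m\}$; hence choosing $M > c_m$ and then $h$ small puts $u^0$ away from all the zeros of the Hermite polynomial, and the lower-order terms $x^{m-2j+1}b_j(x) = \mathcal{O}(h^{(m+1)/2}|y|^{m-2j+1})$ together with $\delta$ are a genuine perturbation on $|y| \leq C$.

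The main obstacle is the transition region $|x| \sim h^{1/2}$, where the WKB amplitude $a_0(x) \sim x^m$ is itself small and the naive bound $|a_0| \gtrsim h^{-m/2}|x|^m$ competes with the $\mathcal{O}(h)$ corrections; this is precisely why the conclusion is stated only on $\Omega \setminus (-Mh^{1/2}, Mh^{1/2})$ rather than on all of $\Omega$. Resolving it requires point \eqref{itm:hermite} of Proposition \ref{prop:WKB}: after the rescaling $y = h^{-1/2}x$ the relevant part of the amplitude is $H_m(y)$ plus a remainder that vanishes to higher order, and since $H_m$ has exactly $m$ simple real zeros all contained in a compact interval, choosing $M$ larger than the largest zero and then taking $h$ sufficiently small (depending on $m$, hence the $h_0(m)$ in the theorems) gives the uniform lower bound $|u^0(x)|e^{\phi(x)/h} \gtrsim 1$ on $\{Mh^{1/2} \leq |x|\} \cap \Omega$, and multiplying back by the strictly positive $e^{-\phi(x)/h}$ completes the proof.
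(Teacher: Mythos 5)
Your proposal follows the same plan as the paper: split $\Omega$ into the fixed region $|x|\ge\varepsilon$ (where item (4) gives $|a_0|\ge c>0$, so the amplitude $\gtrsim h^{-m/2}$ and the $\mathcal O(h)$ correction is harmless) and the transition region $Mh^{1/2}\le|x|\le\varepsilon$ (where item (2), the Hermite structure, is used). One slip: the corrections are \emph{not} merely "a genuine perturbation on $|y|\le C$" — after rescaling, the transition region is $M\le|y|\le\varepsilon h^{-1/2}$, which is not $h$-uniformly bounded. The way the paper (and a complete version of your argument) handles this is to compare each correction $h^{-m/2}h^j x^{m-2j+1}b_j(x)$ with the corresponding Hermite monomial $2^{-m}d_{m-2j}y^{m-2j}$: the ratio is $\mathcal O(|x|)$, so taking $\varepsilon$ small controls it pointwise for \emph{all} $|y|$ in the transition range; then $M$ is chosen so that the leading monomial $d_m y^m$ dominates the remaining lower-order Hermite terms for $|y|>M$. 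This quantitative dominance replaces your softer "$M$ larger than the largest zero of $H_m$" and closes the gap.
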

\begin{proof}
Recall that
\[
u^0(x) = \left( h^{-m/2}a(x,h) + \delta(x,h) \right) e^{-\phi(x)/h} ,
\]
where $\delta(x,h) = \mathcal{O}(h^\infty)$ uniformly on $\Omega$. For each $\varepsilon>0$ there exists $C_\varepsilon>0$ such that $|a_0(x)| > 2/C_\varepsilon$ for $|x| > \varepsilon$; this follows from \eqref{itm:a0} of Proposition \ref{prop:WKB}. Therefore
\begin{equation} \label{eq:upositive1}
|u^0(x)| > C_\varepsilon^{-1} h^{-m/2} e^{-\phi(x)/h}, \quad  |x| > \varepsilon.
\end{equation} 
On the other hand, write the Hermite polynomial $H_m$ as
\[
H_m(y) = d_m y^m + d_{m-2} y^{m-2} + \cdots + d_0, \quad d_m = 2^m > 0,
\]
and choose $M> 0$ and $C_M$ such that $|d_m y^m| > 2 C_M$ and $|d_{m-2k} y^{m-2k}| < \frac{1}{2 m} | d_m y^m |$ for $|y| > M$ and $0 < 2k \leq m$. Referring to \eqref{itm:hermite} of Proposition \ref{prop:WKB} for the definition of $b_k$, choose $\varepsilon > 0$ such that 
\[
|2^m x^{m-2k+1} b_{k}(x)| \leq |d_{m-2k} x^{m-2k}|/2, \quad | x | \leq \varepsilon
\]
for $0 \leq 2k \leq m$. It easily follows from this that  
\begin{equation} \label{eq:upositive2}
|u^0(x)| \geq C'_M e^{-\phi(x)/h}, \quad x\in [-\varepsilon,\varepsilon] \setminus [-Mh^{1/2},Mh^{1/2}]
\end{equation}
for some $C'_M > 0$. In particular, combining \eqref{eq:upositive1}, \eqref{eq:upositive2} shows that $|u^0(x)| > 0$ for $u\in \Omega \setminus [-Mh^{1/2},Mh^{1/2}]$.
\end{proof}
The complementary solution $v^0$ is defined by the standard ansatz: choose $M > 0$ such that the conclusion of Lemma \ref{lem:positivity} holds, and define 
\begin{equation} \label{eq:secondsolution}
v^0(x) = u^0(x) \int_{\pm M h^{1/2}}^x u^0(t)^{-2} \,dt, \quad \pm x \geq Mh^{1/2}.
\end{equation}
Then $v^0$ solves 
\[
\begin{cases}
(P(h)-\lambda^0 )v^0 = 0,\\ v^0(\pm Mh^{1/2}) = 0,\quad  (v^0)'(\pm Mh^{1/2}) = u^0(\pm M h^{1/2})^{-1}.
\end{cases}
\]
Then next lemma provides an asymptotic expansion for $v^0(r_\pm)$.
\begin{lem} \label{lem:secondsolution}
If $v^0$ is defined by \eqref{eq:secondsolution}, then
\begin{equation} \label{eq:vrpm}
v^0(r_\pm) = h^{\frac{m}{2}+1} f_\pm(h) e^{\phi(r_\pm)/h},
\end{equation}
where $f_\pm(h)$ has an asymptotic expansion
\begin{equation} \label{eq:frpm}
f_{\pm}(h) \sim \sum_{j=0}^\infty f^\pm_{j} h^j, \quad f^\pm_{0} = \frac{\pm1}{2\sqrt{V(r_\pm)}}a_0(r_\pm)^{-1}.
\end{equation}
\end{lem}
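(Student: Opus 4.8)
The plan is to substitute the WKB form of $u^0$ from \eqref{itm:close} of Proposition \ref{prop:WKB} into the definition \eqref{eq:secondsolution} and extract the asymptotics of the resulting integral by Laplace's method at the endpoint $r_\pm$. I will carry out the case $r_+>0$ in detail; the case $r_-<0$ is symmetric apart from one sign change, explained at the end. Writing $u^0(x)=\bigl(h^{-m/2}a(x,h)+\delta(x,h)\bigr)e^{-\phi(x)/h}$ with $\delta=\mathcal O(h^\infty)$ uniformly on $\Omega$, one has at $x=r_+$ that $u^0(r_+)=h^{-m/2}\widehat a_+(h)e^{-\phi(r_+)/h}$, where $\widehat a_+(h):=a(r_+,h)+h^{m/2}\delta(r_+,h)\sim\sum_{j\ge0}a_j(r_+)h^j$ has leading term $a_0(r_+)=r_+^m A_0(r_+)\neq0$ by \eqref{itm:a0}; similarly $u^0(t)^{-2}=h^{m}g(t,h)e^{2\phi(t)/h}$ with $g(t,h):=\bigl(a(t,h)+h^{m/2}\delta(t,h)\bigr)^{-2}$. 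Fix a small $\eta>0$ with $r_+-\eta>0$. On $[r_+-\eta,r_+]$ the denominator converges to $a_0$ in $C^\infty$ and $a_0$ is nonvanishing, so for $h$ small $g(\cdot,h)$ is smooth, uniformly bounded in $C^\infty([r_+-\eta,r_+])$, with $g(t,h)\sim\sum_{j\ge0}g_j(t)h^j$ and $g_0=a_0^{-2}$. The task thus reduces to the asymptotics of $\int_{Mh^{1/2}}^{r_+}u^0(t)^{-2}\,dt$, to be multiplied by $u^0(r_+)$.

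For the leading region, note $\phi'=\sqrt V>0$ on $(0,\infty)$, so $e^{2\phi(t)/h}$ is maximized over $[Mh^{1/2},r_+]$ at $t=r_+$, where $\phi'(r_+)=\sqrt{V(r_+)}>0$. I would split the integral at $r_+-\eta$ and, on $[r_+-\eta,r_+]$, apply the standard endpoint Laplace expansion: iterated integration by parts via $e^{2\phi/h}=\tfrac{h}{2\phi'}\bigl(e^{2\phi/h}\bigr)'$ gives, for any $\psi\in C^\infty$ near $r_+$, $\int_{r_+-\eta}^{r_+}\psi(t)e^{2\phi(t)/h}\,dt\sim e^{2\phi(r_+)/h}\sum_{k\ge1}L_k[\psi]h^k$ with $L_1[\psi]=\psi(r_+)/(2\phi'(r_+))$ and each $L_k$ a universal differential functional of $\psi$ at $r_+$. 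Applying this termwise to the expansion of $g(t,h)$ and crudely bounding the truncation remainder (if $g=\sum_{j\le J}g_jh^j+h^{J+1}R_J$ with $R_J$ uniformly bounded, then $h^{J+1}\int|R_J|e^{2\phi/h}=\mathcal O(h^{J+2})e^{2\phi(r_+)/h}$) yields $\int_{r_+-\eta}^{r_+}u^0(t)^{-2}\,dt=h^{m+1}e^{2\phi(r_+)/h}F_+(h)$ with $F_+(h)\sim\sum_{j\ge0}F^+_jh^j$ and $F^+_0=g_0(r_+)/(2\phi'(r_+))=a_0(r_+)^{-2}/(2\sqrt{V(r_+)})$.

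Next I would show that the remaining piece $\int_{Mh^{1/2}}^{r_+-\eta}u^0(t)^{-2}\,dt$ — the part of the integral passing through the region near $x=0$ — is negligible to all orders. Shrinking $\varepsilon$ so that $\varepsilon<r_+-\eta$, the bounds \eqref{eq:upositive1}, \eqref{eq:upositive2} obtained in the proof of Lemma \ref{lem:positivity} give $|u^0(t)|\ge c\,e^{-\phi(t)/h}$ on $[Mh^{1/2},r_+-\eta]$ for some $h$-independent $c>0$, hence $u^0(t)^{-2}\le c^{-2}e^{2\phi(t)/h}$ there; since $\phi$ is increasing on $(0,\infty)$,
\[
\Bigl|\int_{Mh^{1/2}}^{r_+-\eta}u^0(t)^{-2}\,dt\Bigr|\le c^{-2}r_+\,e^{2\phi(r_+-\eta)/h}=\mathcal O(h^\infty)\,e^{2\phi(r_+)/h},
\]
as $\phi(r_+)-\phi(r_+-\eta)=\int_{r_+-\eta}^{r_+}\sqrt V>0$. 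Combining, $\int_{Mh^{1/2}}^{r_+}u^0(t)^{-2}\,dt=h^{m+1}e^{2\phi(r_+)/h}\widetilde F_+(h)$ with $\widetilde F_+\sim\sum_jF^+_jh^j$ (the $\mathcal O(h^\infty)$ term being absorbed), and multiplying by $u^0(r_+)=h^{-m/2}\widehat a_+(h)e^{-\phi(r_+)/h}$ gives \eqref{eq:vrpm} with $f_+(h)=\widehat a_+(h)\widetilde F_+(h)$ and $f^+_0=a_0(r_+)F^+_0=\dfrac{1}{2\sqrt{V(r_+)}}a_0(r_+)^{-1}$, matching \eqref{eq:frpm}. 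For $r_-<0$ the only difference is that $\phi'=-\sqrt V<0$ on $(-\infty,0)$, so $e^{2\phi/h}$ is maximized at the left endpoint of $[r_-,-Mh^{1/2}]$; writing $v^0(r_-)=-u^0(r_-)\int_{r_-}^{-Mh^{1/2}}u^0(t)^{-2}\,dt$ and running the same Laplace expansion at $t=r_-$ (with $|\phi'(r_-)|=\sqrt{V(r_-)}$) produces exactly one extra sign, so $f^-_0=-\dfrac{1}{2\sqrt{V(r_-)}}a_0(r_-)^{-1}$, again as in \eqref{eq:frpm}.

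The only genuine obstacle is the behavior near $x=0$: there $h^{-m/2}a(x,h)$ degenerates from size $h^{-m/2}$ to size $\mathcal O(1)$ through the Hermite structure of \eqref{itm:hermite}, and $u^0$ itself may vanish, so a priori $u^0(t)^{-2}$ could blow up. This is exactly what Lemma \ref{lem:positivity} prevents: it confines the zeros of $u^0$ to an $\mathcal O(h^{1/2})$-window around $0$ and supplies the lower bound $|u^0|\ge c\,e^{-\phi/h}$ outside it, making $u^0(t)^{-2}$ integrable with the crude bound $\mathcal O(e^{2\phi(t)/h})$, which is exponentially smaller than the endpoint contribution $e^{2\phi(r_\pm)/h}$ and hence irrelevant to the asymptotic expansion. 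Everything else is routine endpoint Laplace asymptotics; the only minor wrinkles, both harmless, are the $h$-dependence of the amplitude $g(t,h)$ (absorbed by truncating its expansion) and of the lower limit $Mh^{1/2}$ (which never enters the leading asymptotics).
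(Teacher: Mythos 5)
Your proof is correct and follows essentially the paper's route: split $\int_{Mh^{1/2}}^{r_\pm}u^0(t)^{-2}\,dt$ into a piece away from $r_\pm$, which is exponentially negligible relative to $e^{2\phi(r_\pm)/h}$ by the lower bound of Lemma \ref{lem:positivity}, and a piece near $r_\pm$ handled by endpoint Laplace asymptotics of $h^m\bigl(a(t,h)^{-2}+\mathcal O(h^\infty)\bigr)e^{2\phi(t)/h}$. The only cosmetic difference is that the paper splits at $\pm\varepsilon$ near the origin and applies Laplace over all of $[\pm\varepsilon,r_\pm]$, whereas you split at $r_\pm\mp\eta$ near the endpoint; your sign bookkeeping for $r_-$ and your absorption of the $\mathcal O(h^\infty)$ correction into the amplitude also match the paper.
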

\begin{proof}

To prove \eqref{eq:vrpm}, \eqref{eq:frpm} notice that $\phi$ is strictly convex so using  \eqref{eq:upositive2} we may write 
\[
\int_{\pm Mh^{1/2}}^{r_\pm} u^0(t)^{-2}\, dt = \int_{\pm\varepsilon}^{r_\pm} h^{m} e^{2\phi(t)/h}\left( a(t,h)^{-2} + \mathcal{O}(h^\infty)\right) dt + \mathcal{O}\left(\exp(2\phi(\pm\epsilon)/h)\right).
\]  
The phase $2\phi(t)$ achieves its maximum at $r_\pm$, so evaluating the integral by Laplace's method \cite[Theorem 8.2]{Olver:1997} we get \eqref{eq:vrpm}, where $f_\pm(h)$ satisfy \eqref{eq:frpm}.
\end{proof}

To calculate the Wronskians $\mathcal{W}(\partial_\alpha u^0,u^0)(r_\pm)$  and $\mathcal{W}(\partial_\alpha u^0,v^0)(r_\pm)$ for  $\alpha \in \{\lambda,\beta\}$, use that $\partial_\lambda u^0$ solves
\begin{equation} \label{eq:lambdaderiv}
\begin{cases}
(P(h)-\lambda)\partial_\lambda u_{\lambda,\beta} = u_{\lambda,\beta},\\ \partial_\lambda u_{\lambda,\beta}(0) = 0, \quad  \partial_\lambda u'_{\lambda,\beta}(0) = 0,
\end{cases}
\end{equation}
and $\partial_\beta u^0$ solves
\begin{equation} \label{eq:betaderiv}
(P(h)-\lambda)\partial_\beta u_{\lambda,\beta} = 0
\end{equation}
subject to the initial conditions
\[
 \partial_\beta u_{\lambda,\beta}(0) = \begin{cases} 0 \text{ if $m$ is even},\\ 1 \text{ if $m$ is odd}, \end{cases}\ \partial_\beta u'_{\lambda,\beta}(0) = \begin{cases} 1 \text{ if $m$ is even}, \\ 0 \text{ if $m$ is odd}. \end{cases}
\]
First we need to control how rapidly solutions to \eqref{eq:lambdaderiv}, \eqref{eq:betaderiv} can grow.

\begin{lem} \label{lem:growth}
Let $K$ be a compact subinterval of $\mathbb{R}$. Suppose $u\in C^2(K)$ solves
\[
(h^2 D_x^2 + V(x) - \lambda)u = f
\]
on $K$, where $0 \leq \lambda \leq C_0 h$ for some $C_0 > 0$ and $f\in L^2(K)$. Then there exists $C>0$ depending on $K$ and $C_0$ such that
\begin{multline*}
e^{-\phi(x_1)/h}\left(h^{1/2}|u(x_1)| + |hu'(x_1) + \sqrt{V(x)}u(x_1)|\right) \\ \leq C  e^{-\phi(x_0)/h}\left(h^{1/2}|u(x_0)| + |hu'(x_0) + \sqrt{V}(x_0)u(x_0)| + h^{-1}\|e^{-\phi/h}f\|_{L^2(x_0,x_1)}\right)
\end{multline*}
for $x_0, x_1\in K$. 
\end{lem}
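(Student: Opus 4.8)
\emph{Proof plan.} The inequality is a weighted energy estimate of Agmon–Grönwall type, and I would prove it by first stripping off the exponential weight through a gauge transformation, and then running a Grönwall argument on the resulting first order system, the whole point being to do so without picking up a spurious factor $e^{C/h}$. It suffices to establish the estimate on $K\cap[0,\infty)$ and on $K\cap(-\infty,0]$ separately, integrating outward from the endpoint of $K$ nearest $0$ (this is the configuration arising in Section \ref{subsect:variationofparam1}, where the base point is $x=0$; the forcing term on the right is what absorbs the other orientations). I describe the case $K\subset[0,\infty)$, the case $K\subset(-\infty,0]$ being its mirror image.

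\emph{Gauge transformation.} Since $\phi$ is built so that the eikonal relation $(\phi')^2=V$ holds on $K$, the substitution $W:=e^{-\phi/h}u$ turns $(h^2D_x^2+V-\lambda)u=f$ into
\[
h^2W''+2h\phi'W'+(h\phi''+\lambda)W=-e^{-\phi/h}f,
\]
whose coefficients are now bounded on $K$ uniformly as $h\to0$ (using $\lambda\le C_0h$, so $h\phi''+\lambda=\mathcal O(h)$). Moreover $e^{-\phi/h}|u|=|W|$, and for $x>0$, where $\phi'=\sqrt V$, one computes $e^{-\phi/h}(hu'+\sqrt Vu)=P$ with $P:=hW'+2\phi'W$; so each side of the asserted inequality is comparable to $h^{1/2}|W|+|P|$ evaluated at the two points. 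In the variables $W,P$ the equation reads
\[
hW'=P-2\phi'W,\qquad hP'=(h\phi''-\lambda)W-e^{-\phi/h}f,
\]
whose essential feature is that the coefficient of $P$ in the second equation vanishes and that of $W$ is $\mathcal O(h)$.

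\emph{Energy estimate away from $0$.} Set $\mathcal E(x):=h|W(x)|^2+|P(x)|^2$. Differentiating and using the system,
\[
\tfrac{d}{dx}\mathcal E=2\operatorname{Re}(P\bar W)-4\phi'|W|^2+\tfrac{2}{h}(h\phi''-\lambda)\operatorname{Re}(W\bar P)-\tfrac{2}{h}\operatorname{Re}\!\big((e^{-\phi/h}f)\bar P\big).
\]
On the part of $K$ where $\phi'=\sqrt V$ is bounded below by a positive constant, the dissipative term $-4\phi'|W|^2$ can be used, via Cauchy–Schwarz, to absorb both $2\operatorname{Re}(P\bar W)$ and $\tfrac{2}{h}(h\phi''-\lambda)\operatorname{Re}(W\bar P)=\mathcal O(|W||P|)$, leaving $\tfrac{d}{dx}\mathcal E\le C_1\mathcal E+C_2h^{-2}|e^{-\phi/h}f|^2$ with $C_1,C_2$ independent of $h$. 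Grönwall's inequality together with $\sqrt{\mathcal E}\asymp h^{1/2}|W|+|P|$ then yields the claim on that subinterval, with the forcing entering exactly as $h^{-1}\|e^{-\phi/h}f\|_{L^2}$, and with a constant depending only on $K$ and $C_0$.

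\emph{The turning point — the main obstacle.} The remaining difficulty is a fixed neighbourhood of $x=0$: there $\phi'=\sqrt V$ degenerates, the two characteristic roots $-\phi'\pm\sqrt{(\phi')^2+h\phi''-\lambda}$ collide (coalescing to size $\mathcal O(h^{1/2})$ at $x=0$), and the absorption above fails because $1/\phi'$ is not integrable at $0$. Here I would treat the turning-point window $|x|\lesssim h^{1/2}$ by rescaling $x=h^{1/2}y$, which converts the equation into one with $h$-independent coefficients on a fixed interval $y\in[0,\varepsilon_0]$; an elementary $h$-independent Grönwall bound for the rescaled analogue of $\mathcal E$ applies there, since $\phi(x)=\mathcal O(x^2)$ keeps $e^{-\phi/h}$ comparable to $1$ on this window. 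One then patches this estimate at a point $x\asymp h^{1/2}$ to the forbidden-region estimate of the previous paragraph, which is already available as soon as $\phi'(x)\gtrsim h^{1/2}$. I expect the careful bookkeeping at this matching point — ensuring that no power of $h$ is lost as one passes from the oscillatory regime near $0$, where $\mathcal E$ is essentially conserved, to the exponential regime, where the $-4\phi'|W|^2$ damping takes over — to be the technical heart of the argument, and that once it is in place the pieces assemble into the stated inequality with $C=C(K,C_0)$.
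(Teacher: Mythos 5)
You have the right global plan — gauge transform by $e^{-\phi/h}$, convert to a first-order system, run a Grönwall/energy estimate, and split into a turning-point region and a classically forbidden region — and this is the same plan as the paper's. But the specific energy you build and your handling of the turning point diverge from the paper in a way that leaves a genuine gap. The paper (working with the scalar $w=e^{-\phi/h}u$, not the pair $(W,P)$) sets $A(x)=h(\phi''(x)+E)$ and uses the energy $A(x)|w|^2+h^2|w'|^2$ \emph{on the whole fixed interval} $[0,\varepsilon]$, with no rescaling. The point of this choice is a cancellation: when you differentiate, the cross term $A\,\Re(\bar w w')$ coming from $A|w|^2$ is cancelled exactly by the $-A\,\Re(w\bar w')$ produced when you substitute the ODE into $h^2\Re(\bar w'w'')$; what survives is $-2h\phi'|w'|^2$ (good sign), $A'|w|^2\le C A|w|^2$ (absorbable with an $h$-independent constant because $\phi''\asymp 1$ near the nondegenerate minimum), and the forcing. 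Hence Grönwall on $[0,\varepsilon]$ has an $h$-independent constant with no singular weight and no matching problem; for $x\ge\varepsilon$ the paper switches to $h|w|^2+h^2|w'|^2$ and uses $\phi'\ge\delta$ to absorb. Your energy $\mathcal E=h|W|^2+|P|^2$ with $P=hW'+2\phi'W$ does not have this cancellation: the cross term $2\Re(P\bar W)$ in $\partial_x\mathcal E$ has an $\mathcal O(1)$ coefficient, and absorbing it against $-4\phi'|W|^2$ forces a Grönwall weight of size $\sim 1/\phi'$, which is non-integrable as $x\to 0$.

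This is exactly where your patching breaks. You place the matching at $x\asymp h^{1/2}$ and assert that the forbidden-region estimate is "already available as soon as $\phi'(x)\gtrsim h^{1/2}$." It is not, at least not with a constant depending only on $K$ and $C_0$: from $x\asymp h^{1/2}$ to $x_1$ the Grönwall exponent is $\int_{h^{1/2}}^{x_1} \mathcal O(1/\phi')\,dx\asymp|\log h|$, so you pick up a factor $h^{-c}$. Rescaling by $x=h^{1/2}y$ fixes nothing in the transition zone $h^{1/2}\lesssim x\lesssim 1$ because in either variable the cross term is $\mathcal O(1)$ against a damping that is only $\mathcal O(\phi')$. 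You flag the matching as "the technical heart" and leave it unresolved; that is the part the paper handles by the weighted energy above, and the lemma (with $C=C(K,C_0)$, uniform in $h$) does not follow from the sketch as given. A smaller remark: your $(W,P)$ choice is natural because $|P|$ is literally the quantity written in the lemma, but the paper's interior estimate actually controls $h^{1/2}|w|+h|w'|$ (i.e.\ $e^{-\phi/h}(h^{1/2}|u|+|hu'-\phi'u|)$) — which is what is needed in the subsequent lemmas — and the no-cancellation difficulty you run into is one more sign that $h|w'|$, rather than $|P|$, is the correct second energy variable near the turning point.
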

\begin{proof}
Only the case $x_0=0$ is treated, but it will be clear from the proof that this is not necessary. It is also assumed that $x \geq 0$; the case $x \leq 0$ is handled identically. Write 
\[
u = e^{\phi(x)/h}v; \quad f = e^{\phi(x)/h}g
\]
and set $\lambda = hE$ with $0 \leq E \leq C_0$, so that
\[
e^{-\phi(x)/h} P(h) e^{\phi(x)/h} v = \left( h^2 D_x^2 - 2 \phi'(x) h \partial_x - h\phi''(x) - hE\right)w = g.
\]
Begin by choosing $\varepsilon > 0$ such that $\varepsilon \leq \phi''(x) \leq C_0$ for $x\in [0,\varepsilon]$ --- this is possible since the minimum of $V$ at $x=0$ is nondegenerate. Set $A(x) = h(\phi''(x)+E)$, so that 
\begin{equation} \label{eq:A(X)}
h\varepsilon \leq A(x) \leq 2C_0 h, \quad x\in [0,\varepsilon].
\end{equation}
Furthermore, there exists $C_1>0$ such that $A'(x) = h\phi'''(x) \leq C_1h$. Calculate
\begin{align*}
\tfrac{1}{2} \partial_x \left( A(x) |w(x)|^2 + h^2|w'(x)|^2 \right) &=  \Re \left (\left[h^2 w''(x)+A(x)w(x)\right]\overline{w}'(x) \right) + A'(x)|w(x)|^2 \\
&= -2 h \phi'(x) |w'(x)|^2 + A'(x)|w(x)|^2 - \Re\left(g(x) \overline{w}'(x)\right)
\end{align*}
Now $A'(x) \leq C_2 A(x)$ where $C_2$ is independent of $h$. Using 
\[
\Re \left(g(x) \overline{w}'(x)\right) \leq h^2|w'(x)|^2 + h^{-2}|g(x)|^2, 
\]
we obtain
\[
\tfrac{1}{2} \partial_x \left( A(x) |w(x)|^2 + h^2|w'(x)|^2 \right) \leq C_3 \left(A(x) |w(x)|^2 + h^2|w'(x)|^2 + h^{-2}|g(x)|^2 \right).
\]
Applying Gronwalls inequality, 
\[
A(y) |w(y)|^2 + h^2|w'(y)|^2 \leq C_4 \left( A(0)|w(0)|^2 + h^2|w'(0)|^2 + h^{-2} \| g(x) \|_{L^2(0,\epsilon)} \right).
\]
Finally, use \eqref{eq:A(X)} to bound $A(y)$ from below and $A(0)$ from above.

Now consider the interval $[\varepsilon, x_1]$. On this interval $\phi'(x) > \delta$ for some $\delta > 0$. This time calculate
\begin{align*}
&\frac{1}{2} \partial_x( h |w(x)|^2 + h^2|w'(x)|^2) =  \Re \left(\left[h^2 w''(x)+h w(x)\right]\overline{w}'(x)\right)  \\
&= -2 h \phi'(x) |w'(x)|^2 + h(1 - \phi''(x) - E)w(x) \overline{w}'(x) - \Re\left(g(x) \overline{w}'(x)\right).
\end{align*}
For any $R>0$ and $x\in [\varepsilon,x_1]$,
\[
h(1 - \phi''(x) - E) w(x) \overline{w}'(x) \leq h (R|w|^2 + |w'|^2/R).
\]
By choosing $R>0$ sufficiently large, it follows that $-2\phi'(x) + 1/R < 0$ uniformly on $K$. It remains to apply Gronwall's inequality once more on the interval $[\varepsilon,x_1]$.

\end{proof}

\noindent The next step is to calculate $\partial_\lambda u^0(r_\pm)$ and $\partial_\beta u^0(r_\pm)$.

\begin{lem} \label{lem:wronskians1}
With $N(h)$ given by \eqref{itm:hermitenorm} of Proposition \ref{prop:WKB},
\begin{enumerate} \itemsep4pt
\item $\mathcal W(\partial_\lambda u^0, u^0)(r_\pm) = \pm 2^{-1} h^{-3/2 + m} N(h)^2$,
\item $\mathcal W(\partial_\lambda u^0, v^0)(r_\pm) = \mathcal{O}(h^{-K})$ for some $K>0$.
\end{enumerate}

Consequently,
\begin{equation} \label{eq:lambdau0rpm}
\partial_\lambda u^0(r_\pm) = \mp\frac{h^{-3/2 + m}N(h)^2}{2} v(r_\pm) + \mathcal{O}\left(h^{-K}e^{-\phi(r_\pm)/h}\right).
\end{equation}
\end{lem}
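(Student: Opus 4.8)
The plan is to exploit the variation-of-parameters identity \eqref{eq:variationofparameters} together with the defining equations \eqref{eq:lambdaderiv}, \eqref{eq:betaderiv} for $\partial_\lambda u^0$, $\partial_\beta u^0$. The Wronskian $\mathcal{W}(\partial_\lambda u^0, u^0)$ is not constant in $x$; differentiating it and using that $\partial_\lambda u^0$ solves the inhomogeneous equation $(P(h)-\lambda^0)\partial_\lambda u^0 = u^0$ with $\partial_\lambda u^0(0) = \partial_\lambda (u^0)'(0) = 0$ gives $h^2\frac{d}{dx}\mathcal{W}(\partial_\lambda u^0, u^0) = -|u^0|^2$ (up to an overall sign, after accounting for the $h^2$ in front of $D_x^2$ — I would track this constant carefully). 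Integrating from $0$ to $r_\pm$ and recalling $\mathcal{W}(\partial_\lambda u^0, u^0)(0) = 0$, one gets $\mathcal{W}(\partial_\lambda u^0, u^0)(r_\pm) = \mp h^{-2}\int_0^{r_\pm} u^0(t)^2\,dt$. By \eqref{itm:close} of Proposition \ref{prop:WKB} the mass of $u^0$ concentrates near $x=0$, so $\int_0^{r_\pm} u^0(t)^2\,dt = \tfrac{1}{2}\|u^0\|_{L^2(\Omega)}^2 + \mathcal{O}(h^\infty) = \tfrac{1}{2} h^{1/2+m} N(h)^2 + \mathcal{O}(h^\infty)$, where the factor $h^{1/2+m}$ comes from the normalization $N(h) = \|h^{-1/4-m/2}ae^{-\phi/h}\|_{L^2(\Omega)}$ in \eqref{itm:hermitenorm}. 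Combining these yields part (1).

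For part (2), I would again compute $h^2\frac{d}{dx}\mathcal{W}(\partial_\lambda u^0, v^0) = -u^0 v^0$ since $v^0$ is a homogeneous solution, and integrate. The product $u^0(t)v^0(t)$ is polynomially bounded in $h$: by Lemma \ref{lem:secondsolution} (and the WKB form of $u^0$) the exponential factors $e^{-\phi/h}$ from $u^0$ and $e^{\phi/h}$ from $v^0$ cancel, leaving something of size $\mathcal{O}(h^{-K'})$ on compact sets; integrating over $(0,r_\pm)$ preserves this, so $\mathcal{W}(\partial_\lambda u^0, v^0)(r_\pm) = \mathcal{O}(h^{-K})$ for a suitable $K>0$. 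One subtlety is that $v^0$ is only defined for $\pm x \geq Mh^{1/2}$, so the integral defining the Wronskian must be started at $\pm Mh^{1/2}$ rather than $0$; I would handle the boundary terms there using that $v^0(\pm Mh^{1/2}) = 0$, which actually kills the constant of integration outright, and bound the contribution of the small interval near the origin using Lemma \ref{lem:growth}.

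Finally, \eqref{eq:lambdau0rpm} follows by substituting parts (1) and (2) into \eqref{eq:variationofparameters} with $\alpha = \lambda$: the first term gives $\mathcal{W}(\partial_\lambda u^0, v^0)(r_\pm)\,u^0(r_\pm)$, which is $\mathcal{O}(h^{-K}) \cdot \mathcal{O}(h^{-m/2}e^{-\phi(r_\pm)/h}) = \mathcal{O}(h^{-K'}e^{-\phi(r_\pm)/h})$ (absorb into the error, renaming $K$), and the second term gives $-\mathcal{W}(\partial_\lambda u^0, u^0)(r_\pm)\,v^0(r_\pm) = \mp \tfrac{1}{2}h^{-3/2+m}N(h)^2 v^0(r_\pm)$. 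I expect the main obstacle to be bookkeeping: getting every power of $h$ and every sign right in the Wronskian derivative identities (the $h^2$ in $P(h)$, the $\sgn x$ in $\phi$, and the orientation of the integration interval for $x<0$), and making sure the $\mathcal{O}(h^\infty)$ error from Proposition \ref{prop:WKB}\eqref{itm:close} really does control the difference between $\int_0^{r_\pm}u^0(t)^2\,dt$ and half the full $L^2$ norm — this last point uses that $u^0$ is genuinely an $L^2$ eigenfunction normalized as in \eqref{itm:hermitenorm} and that its tail beyond any fixed point is exponentially small, which itself follows from the WKB approximation plus Lemma \ref{lem:growth}.
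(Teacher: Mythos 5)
Your proposal follows essentially the same route as the paper: differentiate and integrate the two Wronskian identities, use the WKB normalization of Proposition \ref{prop:WKB} and concentration near $x=0$ for part (1), control the boundary contribution and the product $u^0 v^0$ for part (2), and substitute into \eqref{eq:variationofparameters}. The conclusion and the method match.

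One misstatement in part (2) is worth correcting. You claim that $v^0(\pm Mh^{1/2})=0$ ``kills the constant of integration outright,'' i.e.\ that $\mathcal{W}(\partial_\lambda u^0,v^0)(\pm Mh^{1/2})=0$. That is not so: the Wronskian is $\partial_\lambda u^0\,(v^0)' - (\partial_\lambda u^0)'\,v^0$, and while the second term does vanish at $\pm Mh^{1/2}$, the first does not, since $(v^0)'(\pm Mh^{1/2}) = u^0(\pm Mh^{1/2})^{-1}\neq 0$. The boundary term therefore equals $\partial_\lambda u^0(\pm Mh^{1/2})/u^0(\pm Mh^{1/2})$, which is not zero but merely $\mathcal{O}(h^{-K})$: the denominator has a polynomial lower bound from the WKB form of $u^0$, and $\partial_\lambda u^0(\pm Mh^{1/2}) = \mathcal{O}(h^{-K})$ follows from Lemma \ref{lem:growth} applied to \eqref{eq:lambdaderiv}. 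You do cite Lemma \ref{lem:growth}, which is the right tool and is exactly what the paper uses, so the estimate goes through --- but the stated justification is wrong and would leave a reader believing the boundary term is absent rather than merely polynomially bounded. On the sign bookkeeping you flagged: the correct identity is $h^2\partial_x\mathcal{W}(\partial_\lambda u^0,u^0) = +(u^0)^2$, so $\mathcal{W}(\partial_\lambda u^0,u^0)(r_\pm) = h^{-2}\int_0^{r_\pm}(u^0)^2\,dt$ with no extra $\mp$; the $\pm$ in the lemma simply records whether the integration interval has positive or negative orientation, since $r_+>0>r_-$. Your intermediate formulas carry a spurious $\mp$, though your final substitution into \eqref{eq:variationofparameters} nevertheless lands on the sign in \eqref{eq:lambdau0rpm}, so this is a presentation inconsistency rather than a gap.
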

\begin{proof}
$(1)$ Integrate the Wronskian identity
\[
h^2 \partial_x \mathcal W(\partial_\lambda u^0, u^0)(x) = u^0(x)^2
\] 
to obtain
\[
\mathcal W(\partial_\lambda u^0, u^0)(r_{\pm}) = h^{-2} \int_{0}^{r_{\pm}} u^0(t)^2 dt,
\]
using \eqref{eq:lambdaderiv} to compute the initital condition $\mathcal {W}(\partial_\lambda u^0, u^0)(0) = 0$. Now replace $u^0(x)$ with $(a(x,h) + \delta(x,h))e^{-\phi(x)/h}$. As in the proof of Lemma \ref{lem:secondsolution}, modulo an exponentially small relative error change the domain of integration to $[0,\varepsilon]$ and $[-\varepsilon,0]$, and then calculate (half) the $L^2$ norm from Proposition \ref{prop:WKB}.

$(2)$ Similarly,
\[
\mathcal W(\partial_\lambda u^0, v^0)(r_{\pm}) = \mathcal W(\partial_\lambda u^0, v^0)(\pm Mh^{1/2}) + h^{-2} \int_{\pm Mh^{1/2}}^{r_{\pm}} v^0(t) u^0(t) dt.
\]
From Lemma \ref{lem:secondsolution}, $u^0(x) v^0(x) = \mathcal O(h^{-K})$ for $x\in \Omega \setminus [-Mh^{1/2},Mh^{1/2}]$. Furthermore, applying Lemma \ref{lem:growth} to the equation \eqref{eq:lambdaderiv} satisfied by $\partial_\lambda u^0$, it follows that $\partial_\lambda u^0 (\pm Mh^{1/2}) = \mathcal O(h^{-K})$. Combining this with the initial conditions satisfied by $v^0$ according to \ref{lem:secondsolution}, $\mathcal W(\partial_\lambda u^0, v^0)(\pm Mh^{1/2}) = \mathcal O(h^{-K})$.

Now \eqref{eq:lambdau0rpm} follows from \eqref{eq:variationofparameters}.

\end{proof}

\begin{lem} \label{lem:wronskians2}
The Wronskians $W(\partial_\beta u^0,u^0)$ and $W(\partial_\beta u^0,v^0)$ are constant functions satisfying the following.
\begin{enumerate}
\item If $m$ is even, then 
\[
\mathcal W(\partial_\beta u^0,u^0) = u^0(0),
\]
while if $m$ is odd, then
\[
\mathcal W(\partial_\beta u^0,u^0) = -(u^0)'(0).
\]
\item $\mathcal W(\partial_\beta u^0,v^0) = O(h^{-K})$ for some $K>0$.
\end{enumerate}

Consequently,
\begin{equation} \label{eq:betau0rpm}
\partial_\beta u^0(r_\pm) = -\mathcal W(\partial_\beta u^0,u^0) v(r_\pm) + \mathcal{O}\left( h^{-K} e^{-\phi(r_\pm)/h} \right). 
\end{equation}
\end{lem}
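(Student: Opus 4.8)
\textbf{Proof proposal for Lemma \ref{lem:wronskians2}.}

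The plan is to mirror the proof of Lemma \ref{lem:wronskians1}, exploiting that $\partial_\beta u^0$ solves the homogeneous equation \eqref{eq:betaderiv} rather than the inhomogeneous equation \eqref{eq:lambdaderiv}. First I would establish part (1). Since both $\partial_\beta u^0$ and $u^0$ (and also $v^0$) solve $(P(h)-\lambda^0)w = 0$, the Wronskian identity gives $h^2\partial_x \mathcal W(\partial_\beta u^0, u^0) = 0$, so $\mathcal W(\partial_\beta u^0,u^0)$ is a constant, and likewise for $\mathcal W(\partial_\beta u^0,v^0)$; this is why the lemma can assert these are constant functions. To evaluate $\mathcal W(\partial_\beta u^0,u^0)$ I would simply compute it at $x=0$ using the initial conditions. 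Recalling $\mathcal W(f,g) = fg' - f'g$, and using \eqref{eq:parity} together with the initial data defining $u_{\lambda,\beta}$ and $\partial_\beta u_{\lambda,\beta}$: when $m$ is even, $\partial_\beta u^0(0) = 0$, $\partial_\beta (u^0)'(0) = 1$, $u^0(0)\neq 0$, $(u^0)'(0)=0$, so $\mathcal W(\partial_\beta u^0,u^0)(0) = \partial_\beta u^0(0)\,(u^0)'(0) - \partial_\beta (u^0)'(0)\,u^0(0) = -u^0(0)$; when $m$ is odd, $\partial_\beta u^0(0)=1$, $\partial_\beta(u^0)'(0)=0$, $u^0(0)=0$, $(u^0)'(0)\neq 0$, giving $\mathcal W = (u^0)'(0)$. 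This differs in sign from what is stated, so in writing the proof I would be careful with the Wronskian sign convention (or with which argument is first) to land on the stated $+u^0(0)$ and $-(u^0)'(0)$; the discrepancy is purely bookkeeping and will resolve once the convention fixed implicitly by \eqref{eq:variationofparameters} is used consistently.

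For part (2), I would argue exactly as in part (2) of Lemma \ref{lem:wronskians1}: evaluate the constant $\mathcal W(\partial_\beta u^0,v^0)$ at the point $x = \pm Mh^{1/2}$, where $v^0$ has the explicit initial data $v^0(\pm Mh^{1/2}) = 0$, $(v^0)'(\pm Mh^{1/2}) = u^0(\pm Mh^{1/2})^{-1}$ from \eqref{eq:secondsolution}. Then $\mathcal W(\partial_\beta u^0,v^0)(\pm Mh^{1/2}) = \partial_\beta u^0(\pm Mh^{1/2})\,(v^0)'(\pm Mh^{1/2}) = \partial_\beta u^0(\pm Mh^{1/2})\,u^0(\pm Mh^{1/2})^{-1}$ (up to sign). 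To bound this I need two inputs: that $\partial_\beta u^0(\pm Mh^{1/2}) = \mathcal O(h^{-K})$, which follows from Lemma \ref{lem:growth} applied to the homogeneous equation \eqref{eq:betaderiv} with $f = 0$ and initial data bounded by $\mathcal O(1)$ — here the right-hand side of the estimate in Lemma \ref{lem:growth}, evaluated between $x_0 = 0$ and $x_1 = \pm Mh^{1/2}$, is $\mathcal O(h^{1/2})$ times $e^{\phi(\pm Mh^{1/2})/h}$, hence polynomially bounded; and that $u^0(\pm Mh^{1/2})^{-1}$ is polynomially bounded, which is exactly Lemma \ref{lem:positivity} combined with the lower bound \eqref{eq:upositive2} at $|x| = Mh^{1/2}$ (giving $|u^0(\pm Mh^{1/2})| \geq C e^{-\phi(\pm Mh^{1/2})/h}$, and $\phi(\pm Mh^{1/2}) = \mathcal O(h)$). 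Multiplying, the exponential factors cancel and one is left with $\mathcal O(h^{-K})$.

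Finally, \eqref{eq:betau0rpm} follows by substituting into the variation of parameters formula \eqref{eq:variationofparameters} with $\alpha = \beta$: the term $\mathcal W(\partial_\beta u^0,u^0)(r_\pm)\,u^0(r_\pm)$ is $\mathcal O(h^{-K})e^{-\phi(r_\pm)/h}$ since $u^0(r_\pm) = \mathcal O(h^{-m/2})e^{-\phi(r_\pm)/h}$ and the Wronskian is $\mathcal O(1)$, while the term $-\mathcal W(\partial_\beta u^0,v^0)(r_\pm)\,v^0(r_\pm)$ is the main contribution; absorbing the first term into the error gives the stated formula. I expect the only genuine subtlety to be the propagation estimate for $\partial_\beta u^0$ across the region $|x| \lesssim h^{1/2}$, but Lemma \ref{lem:growth} is precisely designed to handle this, so the argument is essentially routine; the rest is sign-chasing and the reuse of bounds already established in Lemmas \ref{lem:positivity}--\ref{lem:wronskians1}.
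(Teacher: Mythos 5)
Your proposal follows the paper's proof almost exactly: constancy of the Wronskians from \eqref{eq:betaderiv}, evaluation of $\mathcal W(\partial_\beta u^0,u^0)$ at $x=0$, bounding $\mathcal W(\partial_\beta u^0,v^0)$ at $\pm Mh^{1/2}$ via Lemma~\ref{lem:growth} and the lower bound on $|u^0|$, then plugging into \eqref{eq:variationofparameters}. Your observation that the apparent sign flip is a Wronskian-convention issue is correct — the stated signs come out right with $\mathcal W(f,g)=f'g-fg'$.

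One small slip worth correcting for the write-up: you assert that for $m$ even $(u^0)'(0)=0$ (and implicitly for $m$ odd $u^0(0)=0$). That is not true in general. By the definition of $u_{\lambda,\beta}$, for $m$ even $(u^0)'(0)=\beta^0$, which is only $\mathcal O(h^\infty)$-small by Proposition~\ref{prop:WKB}\eqref{itm:hermite}, not exactly zero, and likewise $u^0(0)=\beta^0$ when $m$ is odd. This does not affect the computed Wronskian because the offending term is multiplied by $\partial_\beta u^0(0)=0$ (resp.\ $\partial_\beta (u^0)'(0)=0$), but the stated vanishing should be removed. The rest of the argument — the $\mathcal O(h^{-K})$ bound in part (2), the cancellation of exponentials between $\partial_\beta u^0(\pm Mh^{1/2})$ and $u^0(\pm Mh^{1/2})^{-1}$, and absorbing the $\mathcal W(\partial_\beta u^0,u^0)\,u^0(r_\pm)$ term into the error — is correct and is exactly what the paper's (terser) proof leaves to the reader.
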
 
\begin{proof}
From \eqref{eq:betaderiv}, $\partial_\beta u^0$ solves the homogeneous equation, and hence each of the Wronskians is constant.

$(1)$ Calculate $\mathcal W(\partial_\beta u^0,u^0)$ at $x=0$ using the initial conditions given by \eqref{eq:betaderiv}.

$(2)$ Apply Lemma \ref{lem:growth} evaluated at $\pm Mh^{1/2}$ to get that $\mathcal W(\partial_\beta u^0,v^0) = \mathcal O(h^{-K})$.

Again \eqref{eq:betau0rpm} follows by applying \eqref{eq:variationofparameters}.

\end{proof}

\begin{rem}Note that $u^0(0)$ and $(u^0)'(0)$ are both polynomially bounded in $h$ as well, and hence the (absolute) error in  \eqref{eq:betau0rpm} is exponentially small compared to $\mathcal W(\partial_\beta u^0,u^0) v(r_\pm)$. 
\end{rem}

Combining \eqref{eq:lambdau0rpm} with \eqref{eq:betau0rpm}, yields the formula 
\begin{multline} \label{eq:DGpm}
DG_\pm(\lambda^0,\beta^0) =
\begin{bmatrix}
\mp2^{-1} h^{-3/2+m} N(h)^2, &  -\mathcal{W}(\partial_\beta u^0,u^0)\end{bmatrix}v(r_\pm) \\ + \begin{bmatrix}
\mathcal{O}\left( h^{-K} e^{-\phi(r_\pm)/h} \right),& \mathcal{O}\left( h^{-K} e^{-\phi(r_\pm)/h} \right) \end{bmatrix}.
\end{multline}
From \eqref{eq:DGpm} it is easy to calculate $D\mathbf{G}(\lambda^0,\beta^0)^{-1}$: define
\begin{align*}
\mathbf{V}_\lambda &= h^{3/2-m}N(h)^{-2} \begin{bmatrix} v(r_-)^{-1}, &-v(r_+)^{-1} \end{bmatrix}; \\
\mathbf{V}_\beta &= \frac{1}{2}\mathcal W(\partial_\beta u^0,u^0)^{-1} \begin{bmatrix} -v(r_-)^{-1}, & -v(r_+)^{-1} \end{bmatrix}.
\end{align*}
Since $v(r_\pm)^{-1} = \mathcal{O}(h^{-K} e^{-\phi(r_\pm)/h})$, it follows that
\[
D\mathbf{G}(\lambda^0,\beta^0)^{-1} = \begin{bmatrix}
\mathbf{V}_\lambda \\
\mathbf{V}_\beta 
\end{bmatrix} +  \begin{bmatrix}
\mathcal{O}(h^{-K}e^{-3\phi(r_-)/h}) & \mathcal{O}(h^{-K}e^{-3\phi(r_+)/h}) \\
\mathcal{O}(h^{-K}e^{-3\phi(r_-)/h}) & \mathcal{O}(h^{-K}e^{-3\phi(r_+)/h})
\end{bmatrix}.
\]

The following proposition summarizes the different pieces of information needed to prove Theorem \ref{theo:maintheo1}.

\begin{prop} \label{lem:mainlemma1}
Fix an integer $m\geq 0$.
\begin{enumerate}
\item With $a(x,h)$ given by Proposition \ref{prop:WKB},
\[
G_\pm(\lambda^0,\beta^0) = h^{-m/2} a(r_\pm,h)e^{-\phi(r_\pm)/h} + \mathcal{O}\left(h^\infty e^{-\phi(r_\pm)/h}\right).
\]

\item The derivative $D\mathbf{G}(\lambda^0,\beta^0)$ is invertible, and 
\[
D\mathbf{G}(\lambda^0,\beta^0)^{-1} =
\begin{bmatrix}
h^{\frac{1-m}{2}}  p_-(h) \, \exp(-\phi(r_-)/h)&  -h^{\frac{1-m}{2}}  p_+(h) \, \exp(-\phi(r_+)/h) \\
q_-(h) \, \exp(-\phi(r_-)/h)  &  q_+(h) \, \exp(-\phi(r_+)/h) 
\end{bmatrix}.
\]
Here $q_\pm(h) = \mathcal{O}(h^{-K})$, while $p_\pm(h)$ admits the same asymptotic expansion as $N(h)^{-2} f_\pm(h)^{-1}$ so that 
\[
p_\pm(h) \sim \sum_{i=0}^{\infty} p_j^\pm h^j, \quad p_0^\pm = N_0^{-2} (f_0^\pm)^{-1} = \pm \frac{2^{m+1}}{ m! \sqrt{\pi}}\sqrt{V(r_\pm)}\,a_0(r_\pm).
\]

\item \label{itm:D2G} Given $C_0,C_1 > 0$, suppose that $0 \leq \lambda \leq C_0h$ and  $|\beta| < C_1$. Then 
\[
|D^2 G_\pm(\lambda,\beta)| = \mathcal{O}\left(h^{-K}e^{\phi(r_\pm)/h}\right).
\]

\end{enumerate}
\end{prop}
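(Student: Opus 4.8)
The plan is that all three parts follow by combining the lemmas already established, together with one bootstrap argument for the last of them. For (1): by \eqref{eq:betadefinition} we have $u_{\lambda^0,\beta^0}=u^0$, hence $G_\pm(\lambda^0,\beta^0)=u^0(r_\pm)$, and \eqref{itm:close} of Proposition \ref{prop:WKB} applied at $x=r_\pm$ gives $u^0(r_\pm)=h^{-m/2}a(r_\pm,h)e^{-\phi(r_\pm)/h}+\mathcal O(h^\infty)e^{-\phi(r_\pm)/h}$, which is exactly the claim.

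For (2): I would substitute $v^0(r_\pm)=h^{m/2+1}f_\pm(h)e^{\phi(r_\pm)/h}$ from Lemma \ref{lem:secondsolution} into \eqref{eq:DGpm}. Writing $W_\beta:=\mathcal W(\partial_\beta u^0,u^0)$, the leading part of $D\mathbf{G}(\lambda^0,\beta^0)$ then factors as $\Diag\bigl(v^0(r_+),v^0(r_-)\bigr)\,M$, where
\[
M=\begin{bmatrix} -2^{-1}h^{-3/2+m}N(h)^2 & -W_\beta \\ 2^{-1}h^{-3/2+m}N(h)^2 & -W_\beta \end{bmatrix}, \qquad \det M=h^{-3/2+m}N(h)^2\,W_\beta .
\]
The crucial observation is that $\det M$, after removal of its explicit power of $h$, is bounded away from zero uniformly in $h$: $N(h)^2\to N_0^2=2^{-m}m!\sqrt{\pi}\neq 0$ by \eqref{itm:hermitenorm} of Proposition \ref{prop:WKB}, while by Lemma \ref{lem:wronskians2} the constant $W_\beta$ equals $u^0(0)$ or $-(u^0)'(0)$ and is nonzero and polynomially bounded above and below in $h$ --- this is precisely the non-vanishing content of \eqref{eq:parity}, which in turn comes from \eqref{itm:hermite} of Proposition \ref{prop:WKB}. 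Since $v^0(r_\pm)$ grows like $e^{\phi(r_\pm)/h}$, the remainders $\mathcal O(h^{-K}e^{-\phi(r_\pm)/h})$ in \eqref{eq:DGpm} are exponentially small relative to the entries of the leading part; hence $D\mathbf{G}(\lambda^0,\beta^0)$ is invertible and its inverse agrees, up to an exponentially small relative error, with $M^{-1}\Diag\bigl(v^0(r_+)^{-1},v^0(r_-)^{-1}\bigr)$. Computing $M^{-1}$ by Cramer's rule and inserting $v^0(r_\pm)^{-1}=h^{-m/2-1}f_\pm(h)^{-1}e^{-\phi(r_\pm)/h}$, one reads off the displayed block form: the top-row coefficients are a fixed power of $h$ times something with the same asymptotic expansion as $N(h)^{-2}f_\pm(h)^{-1}$ (this defines $p_\pm(h)$), and the bottom-row coefficients are a fixed power of $h$ times $W_\beta^{-1}f_\pm(h)^{-1}$, hence $\mathcal O(h^{-K})$; the mechanical tracking of the powers of $h$ is suppressed. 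Finally $p_0^\pm=N_0^{-2}(f_0^\pm)^{-1}$, and substituting $N_0^2=2^{-m}m!\sqrt{\pi}$ together with $f_0^\pm=\pm(2\sqrt{V(r_\pm)}\,a_0(r_\pm))^{-1}$ from \eqref{eq:frpm} yields $p_0^\pm=\pm\frac{2^{m+1}}{m!\sqrt{\pi}}\sqrt{V(r_\pm)}\,a_0(r_\pm)$.

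Part (3) is proved by a three-stage bootstrap on the growth estimate of Lemma \ref{lem:growth}. Fix $C_0,C_1>0$ and work on the compact interval $K=\overline{\Omega}$, on which $V$ is smooth. For $0\leq\lambda\leq C_0h$ and $|\beta|<C_1$, the function $u_{\lambda,\beta}$ solves the homogeneous equation with initial data at $x=0$ that is polynomially bounded in $h$ uniformly in $(\lambda,\beta)$ (the data consists of $\beta$, $u^0(0)$ and $(u^0)'(0)$), so Lemma \ref{lem:growth}, applied on $[0,r_+]$ and on $[r_-,0]$ with $f=0$, gives $u_{\lambda,\beta}(x)=\mathcal O(h^{-K}e^{\phi(x)/h})$ on $\overline{\Omega}$. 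Differentiating the defining ODE and its initial conditions once, the functions $\partial_\lambda u_{\lambda,\beta}$ and $\partial_\beta u_{\lambda,\beta}$ solve, respectively, $(P(h)-\lambda)w=u_{\lambda,\beta}$ and $(P(h)-\lambda)w=0$, with $\mathcal O(1)$ initial data; since $e^{-\phi/h}u_{\lambda,\beta}=\mathcal O(h^{-K})$ on $\overline{\Omega}$, a second application of Lemma \ref{lem:growth} propagates the bound $\mathcal O(h^{-K}e^{\phi(x)/h})$ to these first derivatives. Differentiating once more, $\partial_\beta^2 u_{\lambda,\beta}\equiv 0$ since $u_{\lambda,\beta}$ is affine in $\beta$, while $\partial_\lambda^2 u_{\lambda,\beta}$ and $\partial_\lambda\partial_\beta u_{\lambda,\beta}$ solve, respectively, $(P(h)-\lambda)w=2\partial_\lambda u_{\lambda,\beta}$ and $(P(h)-\lambda)w=\partial_\beta u_{\lambda,\beta}$, with vanishing initial data, so a third application of Lemma \ref{lem:growth} gives $\mathcal O(h^{-K}e^{\phi(x)/h})$ for these as well. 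Evaluating at $x=r_\pm$ yields $|D^2 G_\pm(\lambda,\beta)|=\mathcal O(h^{-K}e^{\phi(r_\pm)/h})$, uniformly for $0\leq\lambda\leq C_0 h$ and $|\beta|<C_1$.

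The main obstacle lies in Part (2): one must make sure that the leading quantities $v^0(r_\pm)$, $N(h)^2$ and $W_\beta$ do not degenerate on their natural scale, so that the $\mathcal O(h^{-K}e^{-\phi(r_\pm)/h})$ remainders in \eqref{eq:DGpm} are genuinely negligible and the $2\times 2$ inversion is legitimate; this rests on the non-vanishing/parity statement \eqref{eq:parity} (via \eqref{itm:hermite} of Proposition \ref{prop:WKB}) and on the Laplace-method lower bound built into Lemma \ref{lem:secondsolution}. Parts (1) and (3) are, by comparison, routine once Lemma \ref{lem:growth} is available; in (3) the only point to watch is uniformity of the exponent $K$ over the compact parameter range, which is automatic since the constant in Lemma \ref{lem:growth} depends only on $K$ and $C_0$, and the relevant initial data are polynomially bounded uniformly in $(\lambda,\beta)$.
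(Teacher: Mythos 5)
Your proposal is correct and follows essentially the same route as the paper: part (1) is just \eqref{itm:close} of Proposition \ref{prop:WKB} together with $u^0=u_{\lambda^0,\beta^0}$, part (2) is the $D\mathbf{G}^{-1}$ computation displayed immediately before the proposition (which you reorganize as the clean factorization $\Diag(v^0(r_+),v^0(r_-))\,M$ and invert by Cramer's rule), and part (3) is exactly the paper's argument — differentiate the defining ODE twice and feed the resulting inhomogeneous problems through Lemma \ref{lem:growth}. Your explicit bootstrap in (3) and your emphasis on the non-degeneracy of $N(h)^2$ and $\mathcal W(\partial_\beta u^0,u^0)$ in (2) supply details the paper leaves implicit but are precisely what make the inversion and the $\mathcal O(h^{-K})$ bounds legitimate.
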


\begin{proof}
The only part that hasn't already been established is \eqref{itm:D2G}. For this, use the equations

\begin{gather*}
\begin{cases} (P(h)-\lambda)\partial^2_\lambda u_{\lambda,\beta} = 2 \partial_\lambda u_{\lambda,\beta},\\ \partial_\lambda^2 u_{\lambda,\beta}(0) = \partial^2_\lambda u'_{\lambda,\beta}(0) = 0, \end{cases} \quad \begin{cases} (P(h)-\lambda)\partial^2_\beta u_{\lambda,\beta} = 0,\\ \partial^2_\beta u_{\lambda,\beta}(0) = \partial^2_\lambda u'_{\lambda,\beta}(0) = 0, \end{cases} \\ \begin{cases} (P(h)-\lambda)\partial_{\lambda,\beta} u_{\lambda,\beta} = \partial_\beta u_{\lambda,\beta}, \\ \partial_{\lambda,\beta} u_{\lambda,\beta}(0) = \partial_{\lambda,\beta} u'_{\lambda,\beta}(0) = 0, \end{cases}
\end{gather*}
%
%
and then apply Lemma \ref{lem:growth}.
\end{proof}

Using Lemma \ref{lem:mainlemma1}, it is now straightforward to prove Theorem \ref{theo:maintheo1}. The crux of the argument lies in showing that $\mathbf{F}$, defined by \eqref{eq:iteration}, is a contraction mapping in a suitable ($h$-dependent) neighborhood of $(\lambda^0,\beta^0)$.

\begin{proof} [Proof of Theorem \ref{theo:maintheo1}]
Write 
\[
\mathbf{F}(\lambda,\beta) = \begin{bmatrix}
\mathbf{F}_\lambda(\lambda,\beta) \\ \mathbf{F}_\beta(\lambda,\beta)
\end{bmatrix},
\]
where $\mathbf{F}_{\alpha} : \mathbb{R}^2 \rightarrow \mathbb{R}, \, \alpha \in \{ \lambda,\beta \}$.

First we show that there exists $0 \leq \gamma \ll 1$ and $L>0$ such that $|D\mathbf{F}(\lambda,\beta)| < \gamma$ for $|\lambda- \lambda^0| + |\beta-\beta^0| \leq  h^L$. We have
\[
D\mathbf{F}(\lambda,\beta) = I - D\mathbf{G}(\lambda^0,\beta^0)^{-1} D\mathbf{G}(\lambda,\beta).
\]
First, note that $D\mathbf{F}(\lambda^0,\beta^0) = 0$. On the other hand, it follows from Lemma \ref{lem:mainlemma1} that
\begin{equation} \label{eq:Fsecondderivative}
|D^2 \textbf{F}_\alpha (\lambda,\beta)| = \mathcal{O}(h^{-K}), \quad \alpha \in \{ \lambda, \beta \}
\end{equation}
for some $K>0$, hence the result follows by taking $L \gg K$ and applying Taylor's theorem. Furthermore, this also shows that 
\[
\textbf{F}: \{|\lambda- \lambda^0| + |\beta-\beta^0| \leq  h^L\} \rightarrow \{ |\lambda- \lambda^0| + |\beta-\beta^0| \leq  h^L \}.
\]
By the contraction mapping principle, the sequence of iterates $(\lambda^i,\beta^i)$, given recursively by $(\lambda^i,\beta^i) = \textbf{F}(\lambda^{i-1},\beta^{i-1})$, converges to a unique root 
\[
(\lambda^\star,\beta^\star)\in \{|\lambda- \lambda^0| + |\beta-\beta^0| \leq  h^L\}.
\] 
If $L$ is larger than one it follows from \eqref{eq:harmonicapproximation} that $\lambda^\star = \lambda^\Omega$. 

Therefore we may write
\[
(\lambda^\Omega,\beta^\Omega) - (\lambda^0,\beta^0) = (\lambda^1,\beta^1) - (\lambda^0,\beta^0) + \sum_{j=1}^\infty \left( (\lambda^{j+1},\beta^{j+1}) - (\lambda^{j},\beta^{j}) \right),  
\]
and
\[
\sum_{j=1}^\infty  |(\lambda^{j+1},\beta^{j+1}) - (\lambda^{j},\beta^{j}) | \leq \frac{1}{1-\gamma}\left( |\lambda^2-\lambda^1| + |\beta^2-\beta^1| \right).
\]
Now by definition,
\[
(\lambda^2,\beta^2) - (\lambda^1,\beta^1) = \textbf{F}(\lambda^1,\beta^1) - \textbf{F}(\lambda^0,\beta^0).
\]
Taylor expand to second order around $(\lambda^0,\beta^0)$, using that $D\textbf{F}(\lambda^0,\beta^0) = 0$ along with the bound \eqref{eq:Fsecondderivative} to obtain
\[
\textbf{F}(\lambda^1,\beta^1) - \textbf{F}(\lambda^0,\beta^0) = \mathcal{O}\left(h^{-K}\right)\left( |\lambda^1- \lambda^0|^2 + |\beta^1 - \beta^0|^2 \right).
\]
Furthermore,
\[
(\lambda^1,\beta^1) - (\lambda^0,\beta^0) = -D\mathbf{G}(\lambda^0,\beta^0)^{-1} \mathbf{G}(\lambda^0,\beta^0) = \mathcal{O}\left(h^{-K}\right)\sum_\pm e^{-2\phi(r_\pm)/h},
\]
which shows that $|\lambda^2-\lambda^1| + |\beta^2 - \beta^1| = \mathcal{O}\left(h^{-K}\right)\sum_{\pm} e^{-4\phi(r_\pm)/h}$. Consequently
\[
\lambda^\Omega - \lambda^0 = h^{\frac{1}{2}-m}\sum_\pm e^{-2\phi(r_\pm)}s_\pm(h),
\]
where $s_\pm(h)\sim \sum_{j=0}^\infty s^\pm_j\, h^j$ admits the same asymptotic expansion as 
\[
\pm N(h)^{-2} f_\pm(h)^{-2} a(r_\pm,h),
\] 
and hence
\[
s^\pm_0 = \frac{2^{m+1}}{m! \, \pi^{\frac12}} \sqrt{V(r_\pm)}\, a_0(r_\pm)^2. 
\]

\end{proof}

\section{Proof of Theorem \ref{theo:maintheo2}}
The proof of Theorem \ref{theo:maintheo2} follows the same steps as that of Theorem \ref{theo:maintheo1}. By a rescaling argument it may be assumed that $W''(0) = 2$. Fix an integer $m \geq 0$ and let $\lambda^0$ denote the $m$'th eigenvalue of $Q(\nu;h)$ and $\lambda^\Lambda$ the $m$'th eigenvalue of $Q_\Lambda(\nu;h)$.

\begin{lem} [{{\cite{Bocher:1900}}}] \label{lem:frobmethod}
Let $\nu > 0$. Suppose that $B(x;z)$ is smooth in $(x,z) \in [0,L) \times Z$, where $0 < L \leq \infty$ and $Z \subset \mathbb{R}$ is a connected open interval. Then there exists a solution $u$ to the equation
\[
-h^2 u'' + h^2 (\nu^2 - 1/4)x^{-2}u + B(x;z)u = 0
\]
of the form $u = x^{1/2+\nu}w$, where $w(x;z)$ is smooth in $[0,L) \times Z$ and
\[
w(0) = 1, \quad w'(0) = 0.
\]
\end{lem}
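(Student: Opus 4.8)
The statement is a classical Frobenius-type existence result for a regular singular ODE at $x=0$, so the natural approach is a power-series construction followed by a convergence/smoothness argument. First I would substitute the ansatz $u = x^{1/2+\nu}w$ into the equation; since $h^2 D_x^2 = -h^2 \partial_x^2$, a direct computation gives
\[
-h^2\left(x^{1/2+\nu}w\right)'' + h^2(\nu^2-1/4)x^{-2}\,x^{1/2+\nu}w = -h^2 x^{1/2+\nu}\left(w'' + (2\nu+1)x^{-1}w'\right),
\]
the $x^{-2}$ singular terms canceling exactly because $1/2+\nu$ is the indicial root. Hence $w$ must solve the regular-singular equation
\[
w'' + (2\nu+1)x^{-1}w' - h^{-2}B(x;z)w = 0,
\]
and the prescribed data $w(0)=1,\ w'(0)=0$ is the unique choice compatible with analyticity at the singular point (the second indicial root $1/2-\nu$ is excluded).

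Next I would write $w = \sum_{k\ge 0} c_k(z)x^k$ with $c_0 = 1$, expand $B(x;z) = \sum_{j\ge 0} b_j(z)x^j$ (using smoothness of $B$; here one only needs a convergent Taylor-type majorant, or one works with $B$ replaced by its Borel-summed / analytic representative on a complex neighborhood — see below), and match powers of $x$. Because $w'' + (2\nu+1)x^{-1}w'$ contributes $k(k+2\nu)c_k x^{k-2}$ at order $x^{k-2}$ and $k(k+2\nu) > 0$ for all $k\ge 1$ when $\nu>0$, the recursion
\[
k(k+2\nu)\,c_k = h^{-2}\sum_{j=0}^{k-1} b_j\, c_{k-1-j}
\]
determines every $c_k$ uniquely, with $c_0=1$ forcing $w(0)=1$ and the $k=1$ equation giving $(1+2\nu)c_1 = h^{-2}b_0 c_0 \cdot 0$... more precisely the $x^{-1}$-order relation yields $c_1 = 0$, consistent with $w'(0)=0$.

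The main obstacle is convergence and, more importantly, smooth dependence on $z$ — I would not want to grind through majorant estimates by hand. The cleanest route is to cite \cite{Bocher:1900} as the paper already does: the classical theory of linear ODEs with a regular singular point guarantees that when the coefficients are holomorphic (resp. smooth) in $(x,z)$ on $[0,L)\times Z$, the Frobenius solution normalized at the indicial root depends holomorphically (resp. smoothly) on the parameters on $[0,L)\times Z$. Alternatively, one can convert to a first-order system for $(w, xw')$ that is smooth up to $x=0$ and apply the smooth-dependence theorem for ODEs with a mild (non-resonant, since $\nu>0$) singularity; this is exactly the mechanism used for Proposition \ref{prop:frobWKB} later, so here I would simply invoke B\^ocher's theorem and note that the normalization $w(0)=1,\ w'(0)=0$ is uniquely singled out among solutions smooth at $0$. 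That completes the proof.
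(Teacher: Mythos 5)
The paper gives no proof for this lemma---it simply cites B\^ocher's 1900 theorem on regular singular points with non-analytic coefficients---and your reduction via $u = x^{1/2+\nu}w$ to the regular-singular equation $w'' + (2\nu+1)x^{-1}w' - h^{-2}Bw = 0$, followed by an appeal to that theorem for existence and smooth parameter dependence, is exactly the intended reading. One small slip: the recursion should read $k(k+2\nu)c_k = h^{-2}\sum_{j=0}^{k-2} b_j c_{k-2-j}$ (your indices are shifted by one), but this does not affect the conclusion since $k(k+2\nu)>0$ for all $k\ge 1$ when $\nu>0$.
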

   
Any $H_0^1((0,\infty))$ function of the form given by Lemma \ref{lem:frobmethod} lies in the domain of $Q(\nu;h)$ \cite{Everitt:2007}, so any eigenvector of $Q(\nu;h)$ is also of this form. The same observation holds for the eigenvectors of $Q_{\Lambda}(\nu;h)$.

\subsection{WKB construction for $Q(\nu;h)$} \label{subsect:frobWKB}
We need a WKB construction for the $m$'th eigenvector of $Q(\nu;h)$. Since this result is not standard, a proof is provided.

\begin{prop} \label{prop:frobWKB}
Fix an integer $m \geq 0$ and $\Lambda' \supset \Lambda$ of the form $\Lambda' = [0,L')$. Define $\phi \in C^\infty(\Lambda')$ by 
\[
\phi(x) = \int_0^x \sqrt{W}(t) \,dt.
\]
There exists $a_{j}(x) \in C^\infty(\Lambda'), \ j \in \mathbb{N}_{\geq 0}$ with $a_0(x) = x^{2m} + \mathcal O(x^{2m+2})$, and $a(x,h) \in C^\infty(\Lambda')$ with $a(x,h) \sim \sum_{j \geq 0} h^j a_j(x)$, satisfying the following properties.

\begin{enumerate} \itemsep4pt

\item $a^{(2k+1)}_j(0) = 0$ for $j \geq 0, \, k \geq 0$.

\item For each compact $K \subset \Lambda'$,
\[
\left( Q(\nu;h) - \lambda^0 \right ) \left(x^{1/2 + \nu} a e^{-\phi/h}\right) = \mathcal O_K(h^\infty) x^{1/2 + \nu}e^{-\phi/h}.
\]

\item There exists $b_j(x)\in C^\infty(\Lambda')$ for $0 \leq j \leq m$, such that 
\[
\sum_{0\leq j \leq m} h^j a_j(x) = (-1)^m h^{m} \, m! \, L^{(\nu)}_m(h^{-1} x^2) + \sum_{0 \leq j \leq m} h^j x^{2m-2j+2} b_j(x),
\] 
where $L^{(\nu)}_m(y)$ is the Laguerre polynomial of degree $m$.

\item \label{itm:laguerrenorm}
Define $N(\nu;h) := \| h^{-1/2-\nu/2}h^{-m} x^{1/2 + \nu} a e^{-\phi/h} \|_{L^2(\Lambda)}$. Then $N(\nu;h)$ admits an asymptotic expansion
\[
N(\nu;h) \sim \sum_{j = 0}^\infty N_j(\nu) h^j, \quad N_0(\nu)^2 = \frac{ \Gamma(1+m+\nu) m!}{2}.
\]

\item Explicitly,
\begin{align*}
a_0(x) &= \lim_{\varepsilon \rightarrow 0} \varepsilon^{2m} \exp{\left(\int_\varepsilon^x \frac{ 2(2m + 1 +\nu) - \phi''(t) - (2\nu+1)t^{-1} \phi'(t)}{2\phi'(t)} dt\right)}\\ &= x^{2m} A_0(x),
\end{align*}
for some $A_0 \in C^\infty(\Lambda')$ with $A_0(x) > 0$.

\item \label{itm:frobeigen} Associated with the WKB approximation $h^{-m}x^{1/2+\nu}ae^{-\phi/h}$ is an eigenvector $u^0$ of $Q(\nu;h)$ satisfying
\[
h^{-m}x^{1/2+\nu}a(x,h)e^{-\phi(x)/h} - u^0(x) = \mathcal{O}(h^\infty) e^{-\phi(x)/h}, \quad x \in K
\]
for each compact $K \subset \Lambda'$.
\end{enumerate}
\end{prop}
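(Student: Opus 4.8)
The plan is to construct the amplitude $a(x,h)$ by solving a hierarchy of transport equations, to recognize its leading part as a Laguerre normal form by comparison with the exactly solvable harmonic oscillator, and then to promote the resulting quasimode to a genuine eigenfunction. \emph{Step 1 (the transport hierarchy).} Substituting $u = x^{1/2+\nu}ae^{-\phi/h}$ into $(Q(\nu;h)-\lambda)u$, the potential term cancels since $(\phi')^2 = W$, and conjugation by $x^{1/2+\nu}$ removes the centrifugal singularity, so that $(Q(\nu;h)-\lambda)u = x^{1/2+\nu}e^{-\phi/h}\,\mathcal{T}_h a$ with $\mathcal{T}_h a = -h^2a'' - h^2(1+2\nu)x^{-1}a' + h\bigl(2\phi'a' + ((1+2\nu)x^{-1}\phi'+\phi'')a\bigr) - \lambda a$. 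Writing $a\sim\sum_{j\ge 0}h^ja_j$ and $\lambda\sim\sum_{k\ge 1}E_kh^k$ and collecting powers of $h$ yields $\mathcal{L}_0a_0 = 0$ and, for $n\ge 1$, $\mathcal{L}_0a_n = F_n$, where $\mathcal{L}_0b := 2\phi'b' + \bigl((1+2\nu)x^{-1}\phi'+\phi''-E_1\bigr)b$ and $F_n := a_{n-1}'' + (1+2\nu)x^{-1}a_{n-1}' + \sum_{k=1}^{n}E_{k+1}a_{n-k}$ depends on $a_0,\dots,a_{n-1}$ and on $E_2,\dots,E_{n+1}$, the parameter $E_{n+1}$ occurring only in the term $E_{n+1}a_0$.

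\emph{Step 2 (solving across the singular point).} Near $0$ one has $W(x)=x^2+\mathcal{O}(x^4)$, hence $\phi'=x+\mathcal{O}(x^3)$, $\phi''=1+\mathcal{O}(x^2)$, and $\mathcal{L}_0 = 2x\partial_x + (2+2\nu-E_1)$ modulo terms vanishing to second order; the homogeneous solution therefore behaves like $x^{(E_1-2-2\nu)/2}$. By assumption \eqref{itm:fifth}, $W$ is even near $0$, so $\phi'$ is odd and $\phi''$, $x^{-1}\phi'$ even, and $\mathcal{L}_0$ preserves parity. Requiring the hierarchy to stay regular at $0$ forces the exponent to be a nonnegative even integer, and the label $m$ pins it to $2m$, i.e.\ $E_1 = 2(2m+1+\nu)$; solving $\mathcal{L}_0a_0=0$ by quadrature then gives the explicit formula of item 5 and the factorization $a_0 = x^{2m}A_0$ with $A_0$ smooth, even and positive. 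Inductively, writing $a_n$ as an even power series and matching in $\mathcal{L}_0a_n=F_n$, the $x^{2\ell}$-coefficient carries the factor $4(\ell-m)$ on the left, so $\ell=m$ yields the unique solvability constraint at each stage, met by a unique choice of $E_{n+1}$ (possible because the $x^{2m}$-coefficient of $a_0$ is $1$); the remaining free constant in $a_n$ is fixed by normalization. The even formal series so produced is the Taylor expansion of a genuine smooth solution on $\Lambda'$: solve the regular ODE on $(0,L')$ and extend across $0$ by a Frobenius-type argument (cf.\ \cite{Bocher:1900} and Lemma \ref{lem:frobmethod}). This establishes items 1 and 5, and applying Borel's lemma to realize $\sum h^ja_j$ as $a(x,h)\in C^\infty(\Lambda')$ gives item 2, with $\lambda_{\mathrm{WKB}}:=\sum_kE_kh^k$ momentarily in place of $\lambda^0$.

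\emph{Step 3 (Laguerre normal form and normalization).} For item 3, compare with the case $W(x)=x^2$, where $\phi(x)=x^2/2$ and the transport equations are solved exactly by $a(x,h)=(-1)^mh^mm!\,L^{(\nu)}_m(h^{-1}x^2)$, matching the exact eigenfunctions $x^{1/2+\nu}L^{(\nu)}_m(x^2/h)e^{-x^2/2h}$; since $W(x)-x^2=\mathcal{O}(x^4)$, the corrections to the first $m+1$ transport coefficients lie in remainders vanishing to order $2m-2j+2$, which is exactly the stated decomposition involving the $b_j$. For item 4, only $|x|\lesssim h^{1/2}$ contributes to $N(\nu;h)$ up to $\mathcal{O}(h^\infty)$, since elsewhere $e^{-2\phi/h}$ dominates every polynomial factor; rescaling $x=h^{1/2}y$ and inserting item 3 gives $N(\nu;h)^2\to (m!)^2\int_0^\infty y^{1+2\nu}L^{(\nu)}_m(y^2)^2e^{-y^2}\,dy$, which equals $\tfrac{1}{2}\,m!\,\Gamma(1+m+\nu)$ by Laguerre orthogonality, with the full expansion supplied by Watson's lemma.

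\emph{Step 4 (quasimode to eigenfunction) and the main obstacle.} Multiplying $u_{\mathrm{WKB}}:=h^{-m}x^{1/2+\nu}ae^{-\phi/h}$ by a cutoff equal to $1$ near $\Lambda$ and supported in $[0,L')$ gives, via item 2, a quasimode for $Q(\nu;h)$ with quasi-eigenvalue $\lambda_{\mathrm{WKB}}=2(2m+1+\nu)h+\mathcal{O}(h^2)$ whose $L^2$ error is $\mathcal{O}(h^\infty)$ relative to its norm (the commutator term is supported away from $0$, where $\phi$ is bounded below, hence exponentially small). Since the low-lying eigenvalues of $Q(\nu;h)$ are separated by $4h+\mathcal{O}(h^2)$, the spectral theorem yields a unique eigenvalue within $\mathcal{O}(h^\infty)$ of $\lambda_{\mathrm{WKB}}$ --- necessarily the $m$'th --- so $\lambda^0=\lambda_{\mathrm{WKB}}+\mathcal{O}(h^\infty)$ and items 2, 4, 5 hold verbatim with $\lambda^0$. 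Decomposing $u_{\mathrm{WKB}}=c(h)u^0+r$ with $r\perp u^0$ and $u^0$ the normalized $m$'th eigenfunction, invertibility of $Q(\nu;h)-\lambda^0$ on $(u^0)^\perp$ with norm $\gtrsim h^{-1}$ gives $\|r\|_{L^2}=\mathcal{O}(h^\infty)$, which a weighted Agmon-type estimate (as in \cite[Chap.\ 3]{Dimassi:1999}) upgrades to $|r(x)|=\mathcal{O}(h^\infty)e^{-\phi(x)/h}$ on compacts; as $c(h)$ tends to a nonzero constant, rescaling $u^0$ yields item 6. The main obstacle is Step 2: showing the transport hierarchy admits smooth even solutions across $x=0$ --- that the only obstruction at each stage is the $x^{2m}$-coefficient, absorbed exactly by $E_{n+1}$, and that no logarithmic terms or poles appear, which is where hypothesis \eqref{itm:fifth} is essential --- together with the bookkeeping in Step 3 needed to pin down the Laguerre normal form.
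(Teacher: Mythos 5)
Your proposal follows essentially the same route as the paper: conjugate by $x^{1/2+\nu}e^{-\phi/h}$, observe that the evenness of $W$ makes $x^{-1}\phi'$ smooth in $x^2$, solve the transport hierarchy as formal power series in $x^2$ with the obstruction at the $x^{2m}$-coefficient fixing $E_{n+1}$, identify the leading part via the exactly solvable Laguerre/harmonic-oscillator case, compute the norm by Watson's lemma, and upgrade the quasimode to a true eigenfunction by a spectral-gap argument plus weighted (Agmon) estimates. The only cosmetic differences are that the paper realizes the formal series via Borel summation plus the correction lemma \cite[Chap.\ 3, Prop.\ 3.5]{Dimassi:1999} rather than your Frobenius extension, and uses \cite[Chap.\ 6, Thm.\ A.3]{Dimassi:1999} for the final Agmon step; both are standard and interchangeable here.
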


\begin{proof}

Conjugating $Q(\nu;h)$ by $x^{1/2 + \nu}e^{-\phi(x)/h}$ yields
\begin{multline} \label{eq:frobconjugation}
e^{\phi(x)/h} x^{-\nu - 1/2} (Q(\nu;h)-hE) x^{\nu + 1/2} e^{-\phi(x)/h}   \\
=h^2 \left( D_x^2 - (2\nu+ 1)x^{-1} \partial_x \right) + h \left( 2\phi'(x) \partial_x + \phi''(x) + (2\nu + 1)x^{-1} \phi'(x) - E \right).
\end{multline}

Since the Taylor series of $W$ at $x=0$ contains only even terms and $\phi(0) = 0$, it follows that $x^{-1} \phi'(x)$ is smooth in $x^2$. Define the differential operator
\[
L = 2\phi'(x) \partial_x + \phi''(x) + (2\nu + 1)x^{-1}\phi'(x).
\]
Then plugging in a formal expansion $E \sim \sum_{j=0}^\infty E_j h^j$ and $a(x,h) \sim \sum_{j=0}^{\infty} a_j(x) h^j$ into \eqref{eq:frobconjugation} and equating powers of $h$, we obtain the sequence of transport equations
\begin{align} \label{eq:frobtransport1}
&(L - E_0)a_0 = 0, \\
&(L - E_0)a_j = \left(\partial_x^2 + (2\nu+1)x^{-1}\partial_x\right)a_0 + \sum_{k=1}^j E_k a_{j-k}(x), \quad   j \geq 1. \label{eq:frobtransport2}
\end{align}

Although these equations can be solved by ODE methods, instead we follow \cite[Chap. 3]{Dimassi:1999} and first solve \eqref{eq:frobtransport1}, \eqref{eq:frobtransport2} by formal power series; this approach clarifies the role of Assumption \eqref{itm:fifth}. If $\mathbb{C}[[x^2]]$ denotes the space of formal power series in $x^2$, let $D_l[x]$ denotes the one-dimensional subspace spanned by $x^{2l}$. Acting on $\mathbb{C}[[x^2]]$,
\[
L = L_0 + \sum_{k=1}^\infty L_{k}; \quad L_0 = 2 x \partial_x + 2\nu + 2,
\] 
and $L_k : D_l[x]\rightarrow D_{l+k}[x]$ for each $k\geq 0$. Then $L_0$ acting on $D_{l}[x]$ has eigenvalue $2(2l+1+\nu)$. For each integer $m\geq 0$ we can solve the first transport equation $(L - E_0)\widetilde{a}_0=0$ by setting $E_0 = 2(2m+1+\nu)$ and $\widetilde{a}_0 = x^{2m} + \mathcal O(x^{2m+2})$, and then iteratively determining the higher terms in $\widetilde{a}_0$. On the other hand $\partial_x^2 + (2\nu + 1)x^{-1}\partial_x : D_{l+1}[x] \rightarrow D_{l}[x]$, and it is easy to see that there exists a unique $E_j$ so that \eqref{eq:frobtransport2} admits a solution $\widetilde{a}_j \in \mathbb{C}[[x^2]]$.

By a slight abuse of notation, also write $\widetilde{a}_j$ for any fixed $C^\infty(\Lambda')$ function with the given Taylor series obtained by Borel summation. Let $r_0 = (L - E_0)\widetilde{a_0}$. Then $r_0$ is smooth and $r_0 = \mathcal{O}(|x|^\infty)$; using \cite[Chap. 3, Prop 3.5]{Dimassi:1999}, we can solve $(L - E_0)\hat{a}_0 = r_0$ for a smooth $\hat{a} = \mathcal{O}(|x|^\infty)$, and then define $a_0 = \widetilde{a}_0 - \hat{a}_0$. Similarly, each $\widetilde{a}_j$ can be corrected by a function $\hat{a}_j$ vanishing to infinite order at $x=0$, so that $a_j = \widetilde{a}_j - \hat{a}_j$ solves the given transport equation. By construction $(1)$ holds, and $(2)$ follows from a standard argument using the spectral theorem.

$(3)$ Notice that $a_j(x) = c_j x^{2m - 2j} + \mathcal{O}(|x|^{2m - 2j + 2})$ for $0 \leq j \leq m$, and $c_j$ depends only on $W''(0)$. Calculating the recursion relation satisfied by the $c_j$,
\[
c_0 x^{2m} + c_1 h x^{2m-2} \cdots + c_m h^m = (-1)^m h^{m} \, m! \, L^{(\nu)}_m(h^{-1} x^2). 
\]

$(4)$ The Laguerre polynomials satisfy 
\[
\| h^{-\frac{1+\nu}{2}} x^{1/2+\nu}L_m^{(\nu)}(h^{-1}x^2) e^{-x^2/2h} \|^2_{L^2((0,\infty))} =  \frac{ \Gamma(1+m+\nu) m!}{2}.
\]
The result follows from comparing the Laplace expansion of this integral with that of $ \| h^{-\frac{1+\nu}{2}} x^{1/2+\nu} a(x) e^{-\phi(x)/h} \|_{L^2(\Lambda)}^2$ using $(3)$. A priori, the latter asymptotic expansion is in powers of $h^{1/2}$ but the odd terms vanish since $a^{(2k+1)}_j(0) = 0$.

$(5)$ The equation \eqref{eq:frobtransport1} for $a_0$ can be solved explicitly since it is a first order ordinary differential equation (with a singular point at $x=0$).

$(6)$ This fact relies on Agmon estimates \cite[Chap. 6]{Dimassi:1999}. If $u$ in the domain of $Q(\nu;h)$ satisfies $u(L')=0$ and $\Phi \in C^2([0,L'])$ then the following identity holds,
\begin{multline} \label{eq:agmon}
\left< (h^2D_x^2 + h^2(\nu^2 - 1/4)x^{-2} )(e^{\Phi/h}u),e^{\Phi/h}u \right>_{L^2((0,L'))} \\ + \left< (W - (\Phi')^2)e^{\Phi/h}u,e^{\Phi/h}u \right>_{L^2((0,L'))}
= \Re \left < e^{2\Phi/h} Q(\nu;h) u, u \right >_{L^2((0,L'))}.
\end{multline}

By an approximation procedure this also holds for $\Phi$ which is Lipschitz on $[0,L']$ (so $\Phi'$ exists almost everywhere). Furthermore, if $\nu > 0$, then Hardy's inequality shows that if $v$ is in the domain of $Q(\nu;h)$ and $v(L') = 0$, then $v \in H_0^1((0,L'))$. Furthermore,
\[
\| v \|^2_{H^1((0,L'))} \leq C_\nu \left< (h^2D_x^2 + h^2(\nu^2 - 1/4)x^{-2} )v,v \right>_{L^2((0,L'))}.
\]
Therefore
\begin{multline*}
\| D_x ( e^{\Phi/h} u) \|_{L^2((0,L'))}^2 + \left< (W - (\Phi')^2)e^{\Phi/h}u,e^{\Phi/h}u \right>_{L^2((0,L'))} \\ \leq C \Re \left < e^{2\Phi/h} Q(\nu;h) u, u \right >_{L^2((0,L'))}
\end{multline*}
The proof of \cite[Chap. 6, Theorem A.3]{Dimassi:1999} now goes through identically since it depends only on an appropriate choice of phase $\Phi$. Thus for an appropriately normalized eigenfunction $u^0$ of $Q(\nu;h)$ and $K\subset \Lambda'$ of the form $K= [0,L'']$ with $0 < L'' < L'$,
\[
\| h^{-m} x^{1/2 + \nu}a(x,h) - e^{\phi(x)/h} u^0 \|_{H^1 (K)} \leq C_{K,N} h^{N}.
\]
It then remains to apply the one-dimensional Sobolev embedding of $H^1((0,L''))$ functions vanishing at $x=0$ into continuous functions on $[0,L'']$ vanishing at $x=0$.
\end{proof}

Let $u^0$ denote an eigenfunction of $Q(\nu;h)$ with eigenvalue $\lambda^0$ satisfying \eqref{itm:frobeigen} of Proposition \ref{prop:frobWKB}. From Lemma \ref{lem:frobmethod}, there is a unique smooth $y^0$ with $u^0 = x^{1/2+\nu}y^0$ and $y(0) \neq 0, \, (y^0)'(0) = 0$. As  indicated in Section \ref{subsect:idea}, let $u_\lambda$ denote the unique solution to the equation
\[
\left( h^2 D_x^2 + h^2(\nu^2 -1/4)x^{-2} + W - \lambda \right)u_\lambda = 0
\]
of the form $u_\lambda = x^{1/2+ \nu}w_\lambda$, where $w_\lambda$ is smooth and $w_\lambda(0) = w^0(0)$.

\subsection{Variation of parameters II}

The idea is to calculate $G'(\lambda^0)$ by variation of parameters as in Section \ref{subsect:variationofparam1}. The subsequent lemmas are analogues of those in Section \ref{subsect:variationofparam1}. 

\begin{lem} \label{lem:frobsecondsolution}
There exists $M > 0$ such that $|u^0(x)| > 0$ for $x\in [Mh^{1/2},L]$
\end{lem}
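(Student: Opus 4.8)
The plan is to mirror the proof of Lemma \ref{lem:positivity} in the one-dimensional setting, using the WKB description of $u^0$ supplied by Proposition \ref{prop:frobWKB}. By \eqref{itm:frobeigen} of that proposition we may write
\[
u^0(x) = x^{1/2+\nu}\left( h^{-m}a(x,h) + \delta(x,h) \right) e^{-\phi(x)/h},
\]
with $\delta(x,h) = \mathcal{O}(h^\infty)$ uniformly on $\Lambda$, and from part (5) of Proposition \ref{prop:frobWKB} we have $a_0(x) = x^{2m}A_0(x)$ with $A_0 > 0$ on $\Lambda'$. First I would fix a small $\varepsilon>0$ and use $a_0 > 0$ on $[\varepsilon, L]$ together with the asymptotic expansion $a(x,h)\sim\sum h^j a_j(x)$ to obtain a lower bound of the shape $|u^0(x)| \geq C_\varepsilon^{-1} h^{-m} x^{1/2+\nu}x^{2m} e^{-\phi(x)/h}$ for $\varepsilon \leq x \leq L$, exactly as in \eqref{eq:upositive1}.

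Next I would handle the region $[Mh^{1/2},\varepsilon]$ near the origin, where the leading behaviour is governed not by $a_0$ alone but by the Laguerre polynomial appearing in part (3) of Proposition \ref{prop:frobWKB}. Writing
\[
L_m^{(\nu)}(y) = e_m y^m + e_{m-1}y^{m-1} + \cdots + e_0, \qquad e_m = \frac{(-1)^m}{m!}\neq 0,
\]
I would choose $M>0$ large enough that the top-degree term $|e_m y^m|$ dominates the sum of the lower-degree terms by a definite factor for $y = h^{-1}x^2 > M^2$, i.e. for $x > Mh^{1/2}$. Then, using part (3) to split $\sum_{0\le j\le m} h^j a_j(x)$ into the Laguerre contribution $(-1)^m h^m m!\, L_m^{(\nu)}(h^{-1}x^2)$ plus the remainder $\sum h^j x^{2m-2j+2} b_j(x)$, I would choose $\varepsilon>0$ small so the remainder terms $|x^{2m-2j+2}b_j(x)|$ are swamped by the corresponding Laguerre terms on $[0,\varepsilon]$; this is where the extra power of $x^2$ in the $b_j$ terms is used. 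Combining these estimates gives $|u^0(x)| \geq C'_M\, x^{1/2+\nu} e^{-\phi(x)/h}$ on $[Mh^{1/2},\varepsilon]$, the analogue of \eqref{eq:upositive2}. Patching the two regions together, and absorbing the higher-order $a_j$, $j>m$, and the $\mathcal{O}(h^\infty)$ error $\delta$ (which are lower order relative to $h^{-m}$), yields $|u^0(x)|>0$ on all of $[Mh^{1/2},L]$.

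The main obstacle is the bookkeeping near $x=0$: one must be careful that the semiclassical scaling $y = h^{-1}x^2$ interacts correctly with the polynomial degrees, so that the single threshold $x = Mh^{1/2}$ simultaneously makes the Laguerre leading term dominate both the lower-degree Laguerre terms and the Borel-summation remainder terms $b_j$, uniformly in $h$. Once the correct $M$ and $\varepsilon$ are pinned down, the argument is routine, and it also confirms in passing that $u^0$ vanishes to order exactly $x^{1/2+\nu}x^{2m}$ at the origin (i.e. $y^0(0)\neq 0$), matching the parity-type claim used implicitly for $Q(\nu;h)$.
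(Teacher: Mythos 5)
Your proposal is correct and is essentially the paper's own proof: the paper simply states that Lemma \ref{lem:frobsecondsolution} "can be established exactly as" the one-dimensional positivity lemma (Lemma \ref{lem:positivity}), with the Hermite polynomial structure from Proposition \ref{prop:WKB}\eqref{itm:hermite} replaced by the Laguerre polynomial structure from Proposition \ref{prop:frobWKB}(3), precisely as you carry out. The only cosmetic nit is that the remainder $\delta$ you factor off as $x^{1/2+\nu}\delta(x,h)e^{-\phi/h}$ is only $\mathcal{O}(h^\infty)$ on the set $x \ge Mh^{1/2}$ you care about, not uniformly on all of $\Lambda$ as you wrote, since Proposition \ref{prop:frobWKB}\eqref{itm:frobeigen} gives the error without the $x^{1/2+\nu}$ prefactor; this does not affect the argument.
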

\begin{proof}
	The proof can be established exactly as Lemma \ref{lem:secondsolution}; the details are omitted.
\end{proof}

Define a complementary solution by
\[
v^0(x) = u^0(x) \int_{\pm M h^{1/2}}^x u^0(t)^{-2}\, dt,
\]
which therefore solves
\[
\begin{cases}
(Q(\nu;h)-\lambda^0 )v^0 = 0,\\ v^0(\pm Mh^{1/2}) = 0, \quad (v^0)'(\pm Mh^{1/2}) = u^0(\pm M h^{1/2})^{-1}.
\end{cases}
\]
Furthermore, $v^0(L) = h^{m+1}f(\nu;h)e^{\phi(r_\pm)/h}$, where
\[
f(\nu;h) \sim \sum_{j=0}^\infty f_{j}(\nu) h^j, \quad f_{0}(\nu) = {2\sqrt{W(L)}} e^{\phi(L)/h} L^{-1/2-\nu} a_0(L)^{-1}.
\]
Next is the analogue of Lemma \ref{lem:growth}.
\begin{lem} \label{lem:frobgrowth}
Let $K$ be a compact subinterval of $[0,\infty)$. Suppose that $y \in C^2(K)$ solves 
\[
(Q(\nu;h) - \lambda)x^{1/2+\nu}y= f
\]
where $0\leq \lambda \leq C_0h$ and $f \in x^{1/2+\nu}L^2(K)$. Then there exists $C>0$ depending on $K$ and $C_0$ such that
\begin{multline*}
e^{-\phi(x_1)/h}\left(h^{1/2}|y(x_1)| + |hy'(x_1) + \sqrt{W(x_1)}y(x_1)|\right) \\ \leq C e^{-\phi(x_0)/h} \left(h^{1/2}|y(x_0)| + |hy'(x_0) + \sqrt{W(x_0)}y(x_0)| \right) \\ + Ch^{-1} e^{-\phi(x_0)/h} \|e^{-\phi/h}x^{-1/2-\nu}f\|_{L^2(x_0,x_1)}
\end{multline*}
for $x_0, x_1 \in K$.
\end{lem}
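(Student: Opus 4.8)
The proof is the analogue of that of Lemma \ref{lem:growth}; the only genuinely new feature is the inverse-square potential, which after removing the factor $x^{1/2+\nu}$ reappears as a first-order term with a $1/x$ singularity. Write $f = x^{1/2+\nu}g$ with $g \in L^2(K)$, set $\lambda = hE$ with $0 \le E \le C_0$, and put $w = e^{-\phi(x)/h}y$. A short computation conjugating $Q(\nu;h)$ by $x^{1/2+\nu}e^{\phi/h}$ (cf.\ \eqref{eq:frobconjugation}) shows that $w$ solves
\begin{equation*}
-h^2 w'' - \bigl(2h\phi' + h^2(2\nu+1)x^{-1}\bigr) w' - h\bigl(\phi'' + (2\nu+1)x^{-1}\phi' + E\bigr) w = e^{-\phi/h}g ,
\end{equation*}
while, using $\sqrt W = \phi'$,
\begin{equation*}
e^{-\phi(x)/h}\bigl(h^{1/2}|y(x)| + |hy'(x) + \sqrt{W(x)}\,y(x)|\bigr) = h^{1/2}|w(x)| + |hw'(x) + 2\phi'(x)w(x)| ,
\end{equation*}
so it suffices to estimate the right-hand side of this identity at $x_1$ in terms of its value at $x_0$ and $h^{-1}\|e^{-\phi/h}g\|_{L^2(x_0,x_1)}$.

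The singular terms cause no trouble, for two reasons. First, $\phi''$ and $x^{-1}\phi' = x^{-1}\sqrt W$ extend to smooth functions on $[0,L')$ — the latter because $W(x)/x^2 \to W''(0)/2 > 0$ (here assumption \eqref{itm:fifth} enters) — hence are bounded coefficients on compacts, with $x^{-1}\phi' > 0$ near the origin. Second, and crucially, the new first-order term $-h^2(2\nu+1)x^{-1}w'$, like the drift $-2h\phi'w'$, contributes a term of favorable sign in the energy identity: pairing the equation with $\overline{w'}$ produces $-\bigl(2h\phi' + h^2(2\nu+1)x^{-1}\bigr)|w'|^2 \le 0$ on $(0,x_1]$, and this term is simply discarded where the drift term is discarded in Lemma \ref{lem:growth}. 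When $0 \in K$ one uses in addition that $y'(0) = 0$; this is forced, because $x^{-1}y'$ in the equation $-h^2\bigl(y'' + (2\nu+1)x^{-1}y'\bigr) + (W-\lambda)y = g$ must lie in $L^2$ near $0$ while the remaining terms do. Hence $w'(0) = 0$, so $x^{-1}|w'(x)|^2 = O(x)$ is integrable at the origin and the boundary contributions at $x=0$ in the integrated energy identities vanish.

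With these observations the argument transcribes Lemma \ref{lem:growth}. Fix $\varepsilon > 0$ small so that $\phi''$ is bounded below on $[0,\varepsilon]$ and $\phi'$ is bounded below on $[\varepsilon,x_1]$, and treat the representative case $x_0 = 0 < \varepsilon < x_1$ (the others being similar or easier, and $0 \notin K$ avoiding the singular issues entirely). On $[0,\varepsilon]$ use the energy density $A(x)|w|^2 + h^2|w'|^2$ with $A(x) = h\bigl(\phi'' + (2\nu+1)x^{-1}\phi' + E\bigr) \asymp h$; discarding the favorable first-order term, using $|A'| \lesssim A$, and applying Gronwall's inequality with an $h$-independent constant (exactly as in Lemma \ref{lem:growth}) bounds this energy at $\varepsilon$ by a multiple of $A(0)|w(0)|^2 + h^2|w'(0)|^2 + h^{-2}\|e^{-\phi/h}g\|_{L^2(0,\varepsilon)}^2$. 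On $[\varepsilon,x_1]$, where $x$ is bounded away from $0$ and the $1/x$ term is an innocuous bounded coefficient, use the density $h|w|^2 + h^2|w'|^2$, absorbing the cross term $h\bigl(1 - \phi'' - (2\nu+1)x^{-1}\phi' - E\bigr)\Re(w\overline{w'})$ by Cauchy--Schwarz with a large parameter and invoking Gronwall again. Finally, the combination $|hw' + 2\phi'w|$ at $x_1$ is recovered by observing that $\eta := hw' + 2\phi'w$ satisfies $h\eta' = -h(2\nu+1)x^{-1}\eta + h\bigl(\phi'' + (2\nu+1)x^{-1}\phi' - E\bigr)w - e^{-\phi/h}g$, whose own Gronwall estimate — again discarding the favorable $-h(2\nu+1)x^{-1}\eta$ and inserting the bound on $w$ just obtained — controls $|\eta(x_1)|$ by $|\eta(x_0)|$ plus the energy data. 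Chaining the two estimates and translating back through $w = e^{-\phi/h}y$ gives the asserted inequality.

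I expect the main obstacle to be the bookkeeping at $x = 0$: one must check that the energy identities remain valid down to the origin despite the $1/x$ factors, which is where $y'(0) = 0$ (hence $w'(0) = 0$) is used both to make $x^{-1}w'$ bounded and to kill the boundary terms, and one must verify that $A(x) \asymp h$ and the chosen energy densities correctly reproduce the factor $h^{1/2}$ attached to $|y(x_0)|$ on the right-hand side. Away from the origin the estimates are identical to those of Lemma \ref{lem:growth}.
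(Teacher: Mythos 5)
Your proof follows the same path the paper intends: conjugate $Q(\nu;h)$ by $x^{1/2+\nu}e^{\phi/h}$, observe that the inverse-square part appears only as a first-order term $-h^2(2\nu+1)x^{-1}w'$ of favorable sign, and repeat the two-interval energy/Gronwall argument of Lemma~\ref{lem:growth}. Your three preliminary observations — that $x^{-1}\phi'=\sqrt{W(x)/x^2}$ extends smoothly to $0$, that the singular first-order term has the right sign and can be discarded, and that $y'(0)=0$ is forced by $x^{-1}y'\in L^2$ so $w'(0)=0$ and the energy identity holds down to $x=0$ — are exactly the new points that need addressing, and they are correct. The paper's proof is just ``a straightforward adaptation of Lemma~\ref{lem:growth}''; you have supplied that adaptation.

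One step, however, does not close as written. The energy $A(x)|w|^2+h^2|w'|^2$ (respectively $h|w|^2+h^2|w'|^2$) propagated by Gronwall controls $h^{1/2}|w(x_1)|+h|w'(x_1)|$, which under $y=e^{\phi/h}w$ equals $e^{-\phi(x_1)/h}\bigl(h^{1/2}|y(x_1)|+|hy'(x_1)-\sqrt{W(x_1)}\,y(x_1)|\bigr)$ — note the \emph{minus} sign. To recover the \emph{plus}-sign combination $|hw'+2\phi'w|$ you introduce $\eta=hw'+2\phi'w$ and integrate $h\eta' = -h(2\nu+1)x^{-1}\eta + h(\phi''+(2\nu+1)x^{-1}\phi'-E)w - e^{-\phi/h}g$. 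But the resulting bound on $|\eta(x_1)|$ requires $\sup|w|$, for which the energy estimate only gives $|w|\lesssim |w(x_0)|+h^{1/2}|w'(x_0)|+h^{-3/2}\|e^{-\phi/h}g\|_{L^2}$; plugging this in loses a factor $h^{-1/2}$ relative to the stated right-hand side, and no choice of weight in the $(\eta,w)$ energy removes it (the cross term always scales like $h^{-1/2}$ times the energy). So the $\eta$ step does not yield the inequality as stated. This is not really a defect of your argument: the same discrepancy is already present in Lemma~\ref{lem:growth}, whose energy estimate likewise controls $|hu'-\sqrt V u|$ rather than $|hu'+\sqrt V u|$, and the plus-sign version can in fact fail (e.g.\ for the even fundamental solution of the harmonic oscillator one has $y\sim \cosh(\phi/h)$, whose growing coefficient is $O(1)$, not $O(h^{1/2})$). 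In every place the lemma is used — always with $x_0=0$, where $\sqrt{W}(0)=0$ and the two combinations coincide, and with conclusions that tolerate any fixed power of $h$ — the minus-sign version (equivalently the bound $h^{1/2}|w(x_1)|+h|w'(x_1)|\lesssim\cdots$) suffices, so nothing downstream is affected. If you drop the $\eta$ paragraph and state the conclusion with the minus sign (or simply with $h^{1/2}|w|+h|w'|$), your proof is complete and matches the paper's intent.
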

\begin{proof}
Write 
\[
y = e^{\phi(x)/h}w; \quad f = x^{1/2+\nu} e^{\phi(x)/h}g,
\]
and set $\lambda = hE$ with $0 \leq E \leq C_0$. Then $w$ solves the equation
\[
h^2 \left( D_x^2 - (2\nu+ 1)x^{-1} \partial_x \right)w - h \left( 2\phi'(x) \partial_x + \phi''(x) + (2\nu + 1)x^{-1} \phi'(x) - E \right)w = g.
\]
A straightforward adaptation of the argument establishing Lemma \ref{lem:frobgrowth} finishes the proof.

\end{proof}

\begin{lem}
With $N(\nu;h)$ given by \eqref{itm:laguerrenorm} of Proposition \ref{prop:frobWKB},
\begin{enumerate} \itemsep4pt
\item $\mathcal W(\partial_\lambda u^0, u^0)(r_\pm) = h^{-1-\nu+2m} N(\nu;h)^2$,
\item $\mathcal W(\partial_\lambda u^0, v^0)(r_\pm) = \mathcal{O}(h^{-K})$ for some $K>0$.
\end{enumerate}
Consequently,
\[
\partial_\lambda u^0 (L) = -h^{-1-\nu+2m} N(\nu;h)^2\, v^0(L) + \mathcal{O}\left( h^{-K} e^{-\phi(x)/h} \right).
\]
\end{lem}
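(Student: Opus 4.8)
The plan is to transcribe the proof of Lemma~\ref{lem:wronskians1} almost verbatim; the only genuinely new feature is the behaviour at the singular endpoint $x=0$. For part~(1), first recall that since $u_\lambda = x^{1/2+\nu}w_\lambda$ with $w_\lambda(0) = w^0(0)$ fixed independently of $\lambda$, Lemma~\ref{lem:frobmethod} (applied with the smooth parameter-dependent coefficient $B(x;\lambda) = W(x) - \lambda$) shows that $\partial_\lambda u^0$ is again $x^{1/2+\nu}$ times a smooth factor, one which moreover vanishes at $x=0$; and $\partial_\lambda u^0$ solves $(Q(\nu;h)-\lambda^0)\partial_\lambda u^0 = u^0$. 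I would then integrate the Wronskian identity
\[
h^2 \partial_x \mathcal{W}(\partial_\lambda u^0, u^0)(x) = u^0(x)^2
\]
from $0$ to $L$. The boundary term at $x=0$ vanishes: near the origin both $u^0$ and $\partial_\lambda u^0$ are $x^{1/2+\nu}$ times a smooth function, so $\mathcal{W}(\partial_\lambda u^0, u^0)(x) = \mathcal{O}(x^{1+2\nu})\to 0$ because $\nu>0$. Hence $\mathcal{W}(\partial_\lambda u^0, u^0)(L) = h^{-2}\int_0^L u^0(t)^2\,dt = h^{-2}\|u^0\|_{L^2(\Lambda)}^2$. Substituting $u^0 = h^{-m}x^{1/2+\nu}a(x,h)e^{-\phi/h} + \mathcal{O}(h^\infty)e^{-\phi/h}$ from Proposition~\ref{prop:frobWKB}~\eqref{itm:frobeigen}, and using that $\phi\geq 0$ on $\Lambda$ so $\|e^{-\phi/h}\|_{L^2(\Lambda)}$ is bounded, the error and cross terms contribute only $\mathcal{O}(h^\infty)$ to the squared norm; comparing the remaining term with the definition of $N(\nu;h)$ in Proposition~\ref{prop:frobWKB}~\eqref{itm:laguerrenorm} gives part~(1).

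For part~(2), the analogous identity $h^2 \partial_x \mathcal{W}(\partial_\lambda u^0, v^0)(x) = u^0(x)v^0(x)$, integrated from $Mh^{1/2}$ to $L$ (with $M$ as in Lemma~\ref{lem:frobsecondsolution}), gives
\[
\mathcal{W}(\partial_\lambda u^0, v^0)(L) = \mathcal{W}(\partial_\lambda u^0, v^0)(Mh^{1/2}) + h^{-2}\int_{Mh^{1/2}}^L u^0(t)v^0(t)\,dt.
\]
For the integral: since $\phi$ is increasing, Laplace's method at the endpoint (exactly as in the proof of Lemma~\ref{lem:frobsecondsolution}) shows $u^0(x)v^0(x) = \mathcal{O}(h^{-K})$ uniformly on $[Mh^{1/2},L]$, so the integral is $\mathcal{O}(h^{-K})$. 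For the boundary term I would apply Lemma~\ref{lem:frobgrowth} to the inhomogeneous equation satisfied by $\partial_\lambda u^0$ on $[0,Mh^{1/2}]$: the weighted norm of the source occurring there, $\|e^{-\phi/h}x^{-1/2-\nu}u^0\|_{L^2(0,Mh^{1/2})}$, is polynomially bounded because on that interval $x^{-1/2-\nu}u^0 \sim h^{-m}a_0(x) = h^{-m}x^{2m}A_0(x)$ while $e^{-\phi/h}$ behaves like $e^{-x^2/2h}$, which concentrates the integral near $x=0$. Hence $\partial_\lambda u^0(Mh^{1/2})$ and $h(\partial_\lambda u^0)'(Mh^{1/2})$ are $\mathcal{O}(h^{-K})$; combined with $v^0(Mh^{1/2}) = 0$ and $(v^0)'(Mh^{1/2}) = u^0(Mh^{1/2})^{-1} = \mathcal{O}(h^{-K})$ from Lemma~\ref{lem:frobsecondsolution}, this yields $\mathcal{W}(\partial_\lambda u^0, v^0)(Mh^{1/2}) = \mathcal{O}(h^{-K})$, and therefore part~(2).

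Finally, the displayed identity follows by variation of parameters. By construction $\mathcal{W}(u^0, v^0) \equiv 1$, so the analogue of \eqref{eq:variationofparameters} reads $\partial_\lambda u^0(x) = \mathcal{W}(\partial_\lambda u^0, v^0)(x)\,u^0(x) - \mathcal{W}(\partial_\lambda u^0, u^0)(x)\,v^0(x)$; evaluating at $x=L$, part~(2) together with $u^0(L) = \mathcal{O}(h^{-K})e^{-\phi(L)/h}$ makes the first term $\mathcal{O}(h^{-K}e^{-\phi(L)/h})$, while part~(1) makes the second term $-\,h^{-1-\nu+2m}N(\nu;h)^2\,v^0(L)$. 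I expect the only step not reducible to a direct transcription of Section~\ref{subsect:variationofparam1} to be the analysis at the singular endpoint $x=0$: both the vanishing of the Wronskian boundary term in part~(1) and the polynomial boundedness of the source fed into Lemma~\ref{lem:frobgrowth} in part~(2) rely on the Frobenius form $u^0 = x^{1/2+\nu}w^0$ and on $\nu>0$. Everything else — Laplace's method at the turning-point-free endpoint $L$, the Gronwall estimate of Lemma~\ref{lem:frobgrowth}, and the bookkeeping with the complementary solution — is identical to the one-dimensional case.
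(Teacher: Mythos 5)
Your proposal is correct and follows essentially the same route as the paper's proof: integrate the Wronskian identity from $x=0$ (using the Frobenius form $u^0 = x^{1/2+\nu}y^0$, $\partial_\lambda u^0 = x^{1/2+\nu}\partial_\lambda y^0$ with $\nu>0$ to kill the boundary contribution at the origin) for part (1), integrate from $Mh^{1/2}$ and invoke Lemma~\ref{lem:frobgrowth} for part (2), then close with the variation-of-parameters identity. The only cosmetic difference is that in part (1) the paper records the slightly stronger fact $\partial_\lambda y^0(0) = (\partial_\lambda y^0)'(0) = 0$ (so the Wronskian vanishes to an even higher order than the $\mathcal{O}(x^{1+2\nu})$ you use), but your weaker observation already suffices; you also correctly interpret the typo $e^{-\phi(x)/h}$ in the displayed conclusion as $e^{-\phi(L)/h}$.
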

\begin{proof}
$(1)$ First note that $\mathcal{W}(x^{1/2+\nu}f_1,x^{1/2+\nu}f_2)(x) = x^{1+2\nu}\mathcal{W}(f_1,f_2)$, and that
\begin{equation} \label{eq:froblambda}
\partial_\lambda u^0 = x^{1/2+\nu} \partial_\lambda y^0; \quad \partial_\lambda y^0(0) = (\partial_\lambda y^0)'(0) = 0.
\end{equation}
Therefore $\mathcal{W}(\partial_\lambda u^0, u^0)(0)=0$, so by the same argument as in Lemma \ref{lem:wronskians1},
\[
\mathcal{W}(\partial_\lambda u^0, u^0)(L) =  h^{-2} \int_0^L u^0(t)^2 \,dt = h^{-1-\nu+2m} N(\nu;h)^2.
\]

$(2)$ Similarly,
\[
\mathcal W(\partial_\lambda u^0, v^0)(L) = \mathcal W(\partial_\lambda u^0, v^0)(Mh^{1/2}) + h^{-2} \int_{Mh^{1/2}}^{L} v^0(t) u^0(t) \,dt.
\]
Now $\partial_\lambda u_\lambda$ solves
\[
(Q(\nu;h) - \lambda) \partial_\lambda u_\lambda = u_\lambda,
\]
so it follows from \eqref{eq:froblambda} and Lemma \ref{lem:frobgrowth} that $\partial_\lambda u^0(x) = \mathcal{O}(h^{-K}e^{\phi(x)/h})$.

Also $u^0(x) v^0(x) = \mathcal{O}(h^{-K})$ from Lemma \ref{lem:frobsecondsolution}. The rest of the proof goes through just as in Lemma \ref{lem:wronskians1}.
\end{proof}

\begin{prop} \label{lem:mainlemma2}
Fix an integer $m\geq 0$.
\begin{enumerate}
\item With $a(x,h)$ given by Proposition \ref{prop:frobWKB}, we have that
\[
G(\lambda^0) = h^{-m} L^{1/2+\nu}a(L,h)e^{-\phi(L)/h} + \mathcal{O}\left(h^\infty e^{-\phi(L)/h}\right).
\]

\item The derivative $G'(\lambda^0)$ is nonzero, and 
\[
G'(\lambda^0)^{-1} = -h^{-\nu - m}  p(\nu;h) e^{-\phi(L)/h},
\]
where  $p(\nu;h)$ admits the same asymptotic expansion as $N(\nu;h)^{-2} f(\nu;h)^{-1}$, so that 
\[
p(\nu;h) \sim \sum_{i=0}^{\infty} p_j(\nu) h^j, \quad p_0(\nu) = N_0(\nu)^{-2} f_0(\nu)^{-1} = \frac{4\sqrt{W(L)}}{\Gamma(1+m+\nu) m!} L^{1/2+\nu} a_0(L).
\]

\item \label{itm:frobD2G} Given $C_0> 0$, suppose that $0 \leq \lambda \leq C_0h$. Then 
\[
G''(\lambda) = \mathcal{O}\left(h^{-K}e^{\phi(L)/h}\right).
\]

\end{enumerate}
\end{prop}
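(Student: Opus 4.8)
The plan is to mirror the proof of Proposition~\ref{lem:mainlemma1}, with the pair $(\lambda,\beta)$ there replaced by the single parameter $\lambda$ — there is no auxiliary parameter $\beta$ because the Frobenius boundary condition at $x = 0$ already determines the solution — and with all Wronskian data supplied by the lemma stated immediately above.

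Part~(1) is immediate: since $u_{\lambda^0} = u^0$ one has $G(\lambda^0) = u^0(L)$, and the asserted formula is precisely \eqref{itm:frobeigen} of Proposition~\ref{prop:frobWKB} evaluated at $x = L$ (choosing $\Lambda'$ so that the compact set $[0,L]$ lies in $\Lambda'$). For part~(2) I would use the pointwise identity
\[
\partial_\lambda u^0(x) = \mathcal{W}(\partial_\lambda u^0, v^0)(x)\,u^0(x) - \mathcal{W}(\partial_\lambda u^0, u^0)(x)\,v^0(x),
\]
which holds for any $C^2$ function once $\mathcal{W}(u^0, v^0) = 1$, and this normalization is built into the definition of $v^0$. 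Evaluating at $x = L$ and inserting the estimates $\mathcal{W}(\partial_\lambda u^0, v^0)(L) = \mathcal{O}(h^{-K})$ and $\mathcal{W}(\partial_\lambda u^0, u^0)(L) = (\text{explicit power of } h)\cdot N(\nu;h)^2$ from the preceding lemma, the bound $u^0(L) = \mathcal{O}(h^{-K}e^{-\phi(L)/h})$ from part~(1), and the expansion $v^0(L) = h^{m+1}f(\nu;h)e^{\phi(L)/h}$, one finds that $G'(\lambda^0)$ equals a fixed power of $h$ times $N(\nu;h)^2 f(\nu;h)\,e^{\phi(L)/h}$ up to an $\mathcal{O}(h^{-K}e^{-\phi(L)/h})$ error, which is exponentially smaller. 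As $N_0(\nu)^2 f_0(\nu) \neq 0$, $G'(\lambda^0)$ is nonzero for small $h$, and inverting — the error being exponentially subdominant does not affect the asymptotic expansion — gives $G'(\lambda^0)^{-1}$ of the claimed form, with $p_0(\nu) = N_0(\nu)^{-2}f_0(\nu)^{-1}$ read off exactly as $p_\pm$ was in the proof of Theorem~\ref{theo:maintheo1}.

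Part~(3) is the only point requiring real work, and it follows the bootstrap used for the third part of Proposition~\ref{lem:mainlemma1}. Differentiating $(Q(\nu;h) - \lambda)u_\lambda = 0$ once and twice in $\lambda$ yields
\[
(Q(\nu;h) - \lambda)\partial_\lambda u_\lambda = u_\lambda, \qquad (Q(\nu;h) - \lambda)\partial_\lambda^2 u_\lambda = 2\,\partial_\lambda u_\lambda .
\]
Since $u_\lambda = x^{1/2+\nu}w_\lambda$ with $w_\lambda$ smooth jointly in $(x,\lambda)$ by Lemma~\ref{lem:frobmethod}, and since the initial data $w_\lambda(0) = w^0(0)$ and $w_\lambda'(0) = 0$ are independent of $\lambda$, both $\partial_\lambda u_\lambda$ and $\partial_\lambda^2 u_\lambda$ are again of the form $x^{1/2+\nu}\times(\text{smooth})$ with data vanishing at $x = 0$, so Lemma~\ref{lem:frobgrowth} applies to each with $x_0 = 0$. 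I would then run three applications of that lemma, all uniform for $0 \le \lambda \le C_0 h$: first to $u_\lambda$ with zero forcing, using $\phi(0) = 0$ and $W(0) = 0$ to kill the boundary term, giving $e^{-\phi(x)/h}|w_\lambda(x)| = \mathcal{O}(h^{-K})$ on $[0,L]$; then to $\partial_\lambda u_\lambda$ with forcing $u_\lambda$, whose contribution is $h^{-1}\|e^{-\phi/h}w_\lambda\|_{L^2(0,L)} = \mathcal{O}(h^{-K})$ by the first step; then to $\partial_\lambda^2 u_\lambda$ with forcing $2\partial_\lambda u_\lambda$, whose contribution is again $\mathcal{O}(h^{-K})$ by the second step. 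This gives $G''(\lambda) = \partial_\lambda^2 u_\lambda(L) = \mathcal{O}(h^{-K}e^{\phi(L)/h})$.

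There is no deep obstacle here — the proof is a transcription of that of Theorem~\ref{theo:maintheo1}. The one place to be careful is checking that the $\lambda$-derivatives $\partial_\lambda u_\lambda$, $\partial_\lambda^2 u_\lambda$ keep the Frobenius normal form $x^{1/2+\nu}\times(\text{smooth})$ with vanishing boundary values at the regular singular point $x = 0$, so that Lemma~\ref{lem:frobgrowth} is genuinely applicable there; this is exactly where the smooth dependence of $w_\lambda$ on the parameter $\lambda$ from Lemma~\ref{lem:frobmethod} enters. The remaining work — tracking the $h$-power and leading constant in part~(2) — is the same computation as for $p_\pm$ in Proposition~\ref{lem:mainlemma1}, now fed by the values $N_0(\nu)$, $f_0(\nu)$ and the formula for $v^0(L)$ recorded just above.
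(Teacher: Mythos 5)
Your proposal is correct and follows the same route as the paper: parts (1) and (2) are read off from Proposition~\ref{prop:frobWKB}~\eqref{itm:frobeigen}, the Wronskian lemma above, and the expansion for $v^0(L)$, while part (3) is obtained by differentiating $(Q(\nu;h)-\lambda)u_\lambda = 0$ twice in $\lambda$ and applying Lemma~\ref{lem:frobgrowth} iteratively (the paper's proof just says this in one line, but it is exactly what it means, and is the mirror image of Proposition~\ref{lem:mainlemma1}~\eqref{itm:D2G}). Your explicit check that $\partial_\lambda u_\lambda$ and $\partial_\lambda^2 u_\lambda$ stay in the form $x^{1/2+\nu}\times(\text{smooth})$ with vanishing Frobenius data at $x=0$ — using the joint smoothness from Lemma~\ref{lem:frobmethod} and the $\lambda$-independence of the prescribed initial data — is the right point to verify and is correctly handled.
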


\begin{proof}
It remains to prove \ref{itm:frobD2G}, which follows from Lemma \ref{lem:frobgrowth}.
\end{proof}

\begin{proof} [Proof of Theorem \ref{theo:maintheo2}]
The proof is the same as that of Theorem \ref{theo:maintheo1}, or in fact slightly simpler since $G$ is scalar valued in this case. Following the same argument, we find that
\[
\lambda^\Lambda - \lambda^0 = h^{-\nu-2m} e^{-2\phi(L)/h} s(\nu;h),
\]
where $s(\nu;h) \sim \sum_{j=0}^\infty s_j(\nu) h^j$ admits the same asymptotic expansion as 
\[
L^{1/2+\nu}N(\nu;h)^{-2} f(\nu;h)^{-1}a(L,h),
\] and hence
\[
s_0(\nu) =  \frac{4\sqrt{W(L)}}{\Gamma(1+m+\nu) m!} L^{1+2\nu} a_0(L)^2.
\]

\end{proof}

\section*{Acknowledgements}
I would like to thank Semyon Dyatlov for an introduction to the techniques used in this paper and for pointing out the applicability of the WKB method for more general potentials. I would also like to thank Maciej Zworski for suggesting that this method could be applied to confined eigenvalue problems, and for several useful discussions.

\end{document}